\documentclass[notitlepage,11pt,reqno]{amsart}

\usepackage{amsopn,esint,nicefrac}
\usepackage{mathabx}
\usepackage[final]{hyperref}
\usepackage[T1]{fontenc}
\usepackage{graphicx}
\usepackage{cancel,pdfsync}
\usepackage{bbm}
\usepackage{tikz}
\usetikzlibrary{patterns} 
\usepackage{float}

\usepackage[utf8]{inputenc}
\usepackage{apptools}
\usepackage[margin=1in]{geometry}
\allowdisplaybreaks
\newcommand{\stkout}[1]{\ifmmode\text{\sout{\ensuremath{#1}}}\else\sout{#1}\fi}

\renewcommand{\fint}{\mathop{\int\hskip -0,93em -\, \!\!\!}}

\newcommand{\R}{\mathbb{R}}

\newcommand{\Rn}{\mathbb{R}^n}
\usepackage{graphicx,enumitem,dsfont,upgreek}
 \usepackage[dvips]{epsfig}
 \usepackage[mathscr]{eucal}
\usepackage{amscd}
\usepackage{amssymb,nicefrac}
\usepackage{amsthm}
\usepackage{amsmath}
\usepackage{latexsym}
\usepackage{dsfont}
\usepackage{upref}
\usepackage{hyperref}

\usepackage{color}
\theoremstyle{plain}

\newtheorem{thm}{Theorem}[section]
\theoremstyle{plain}
\newtheorem{lem}[thm]{Lemma}
\newtheorem{prop}[thm]{Proposition}

\newtheorem{defi}[thm]{Definition}

\theoremstyle{definition}
\newtheorem*{maintheorem*}{Main Theorem}
\newtheorem*{maincorollary*}{Main Corollary}
{%
\setcounter{enumi}{0}

\begin{enumerate}}%
{\end{enumerate} }

{%
\setcounter{enumi}{0}

\begin{enumerate}}%
{\end{enumerate} }

\newcommand{\norm}[1]{\ensuremath{\left\|#1\right\|}}

\newcommand{\cone}{\ensuremath{\mathcal{C}}}
\newcommand{\cD}{\ensuremath{\mathcal{D}}}
\newcommand{\cE}{\ensuremath{\mathcal{E}}}

\newcommand{\cV}{\ensuremath{\mathcal{V}}}

\newcommand{\dist}{{\rm dist}}
\newcommand{\xb}{\ensuremath{\bar{x}}}
\newcommand{\yb}{\ensuremath{\bar{y}}}
\newcommand{\abar}{\ensuremath{\bar{a}}}

\newcommand{\grad}{\nabla}

\newcommand{\Ld}{\ensuremath{L_{\Delta}}}

\newcommand{\dx}{\ensuremath{\, {\rm d}x}}
\newcommand{\dy}{\ensuremath{\, {\rm d}y}}
\newcommand{\dz}{\ensuremath{\, {\rm d}z}}

\newcommand{\dr}{\ensuremath{\, {\rm d}r}}
\newcommand{\dt}{\ensuremath{\, {\rm d}t}}

\newcommand{\supp}{\ensuremath{\mathrm{supp}\,}}

\numberwithin{equation}{section} \allowdisplaybreaks

\title[H\"older and Harnack for the logarithmic Laplacian]{H\"older regularity and Harnack inequality for the logarithmic Laplacian}

\begin{document}

\author{Anup Biswas}

\author{Subhajit Roy}

\author{Aniket Sen}

\author{Lovelesh Sharma}

\address{Indian Institute of Science Education and Research-Pune, Dr.\ Homi Bhabha Road, Pashan, Pune 411008, INDIA.}
\email{anup@iiserpune.ac.in}
\email{subhajit.roy@iiserpune.ac.in}
\email{aniket.sen@students.iiserpune.ac.in}
\email{lovelesh.sharma@iiserpune.ac.in}

\begin{abstract}
In this article, we establish Schauder-type estimates for the logarithmic Laplacian. We show that for a $\upkappa$-H\"older inhomogeneous term, the solution is also  $\upkappa$-H\"older in the interior. In fact, the interior regularity slightly exceeds $\upkappa$-H\"older smoothness up to a logarithmic correction. Additionally, we prove a Harnack inequality for non-negative solutions.
\end{abstract}

\keywords{Almost Lipschitz, maximum principle, interior regularity}
\subjclass[2020]{35R11, 35B65, 35D30}

\maketitle


\section{Introduction}
In this article, we consider the logarithmic Laplacian defined by
\begin{equation}\label{E1.1}
\Ld u(x) := c_n \int_{\mathbb{R}^n} \frac{u(x) \mathbbm{1}_{B_1}(z) - u(x+z)}{|z|^n}  \dz + \rho_n u(x),
\end{equation}
where
$$
c_n:=\pi^{-\frac{n}{2}}\Gamma\left(\frac{n}{2}\right),
\qquad
\rho_n:=2\log 2 + \psi\left(\frac{n}{2}\right)-\gamma.
$$
Here, $\Gamma$ is the Gamma function, $\psi=\frac{\Gamma'}{\Gamma}$ is the Digamma function, and $\gamma = -\Gamma ' (1)$ is the Euler-Mascheroni constant. 
From \cite[Remark 1.2]{CW19}, we also have
$$
\rho_n = -2\gamma + \sum\limits_{k=1}^{\frac{n-1}{2}} \frac{2}{2k-1}\quad \text{for odd } n, \qquad \text{and} \qquad \rho_n = 2(\log 2 - \gamma) + \sum_{k=1}^{\frac{n-2}{2}} \frac{1}{k} \quad \text{for even } n.$$
Since $\gamma\approx 0.58<\log2<1$, it readily follows that $\rho_n \geq 0$ for $n\geq 2$. This operator arises naturally as the first-order expansion of the fractional Laplacian $(-\Delta)^s$ as $s\to 0^+$. More precisely, as shown in \cite[Theorem 1.1]{CW19}, for any $u\in C^\gamma_c(\Rn), \gamma>0$, we have
$$ \frac{(-\Delta)^su-u}{s}\to \Ld u \quad \text{in} \; L^p(\Rn), 1<p\leq \infty, \quad \text{as}\; s\to 0^+.$$
The logarithmic Laplacian $\Ld$ belongs to the class of weakly singular operators with Fourier symbol $2\log|\cdot|$, serving as a borderline case between the fractional Laplacian and integrable operators. Furthermore, $\Ld$ can be viewed as a close relative of the L\'evy operator associated with geometrically $\alpha$-stable processes, which have Fourier symbol $\log(1+|\cdot|^\alpha)$ for $\alpha\in(0,2]$. Several properties of this latter class of operators have been investigated in the literature (see, e.g., \cite{Bass, LB14, F23, KM17}). It is worth pointing out that $\Ld$ cannot be written as the generator of a L\'evy process, as its Fourier symbol changes sign. Consequently, standard probabilistic tools may not be directly applicable.

Given a domain $\Omega$ in $\Rn$, we study the following equation
\begin{equation}\label{E1.2}
\Ld u(x) =f(x)\quad  \text{ in }\; \Omega.
\end{equation}
The solution is understood in the weak sense. To define the notion of solution we need the following spaces
\begin{align*}
L^1_0(\Rn)=\{u:\Rn\to \R\; \text{measurable satisfying}\; \int_{\Rn}\frac{|u(z)|}{(1+|z|)^n}\dz\,<\infty\}.
\end{align*}
By $\cV(\Omega)$ we denote the collection of all functions $v\in L^1_0(\Rn)$ that satisfy
$$\iint_{x,y\in\Omega, |x-y|\leq 1}\frac{|v(x)-v(y)|^2}{|x-y|^n}\dx\dy + \int_\Omega v^2(x)\dx< +\infty.$$
The bilinear form associated to \eqref{E1.1} is given by
\begin{align*}
\cE(u, v) &=\frac{c_n}{2}\iint_{x,y\in\Rn, |x-y|\leq 1} \frac{(u(x)-u(y))(v(x)-v(y))}{|x-y|^n}\dx\dy
\\
&\quad -c_n\iint_{x,y\in\Rn, |x-y|> 1} \frac{u(x)v(y)}{|x-y|^n}\dx\dy + \rho_n\int_{\Rn} u(x)v(x)\dx.
\end{align*}
We say $u\in\cV(\Omega)$ is a weak subsolution to \eqref{E1.2} if for all $\phi\in C^\infty_c(\Omega), \phi\geq 0$, we have
$$\cE(u, \phi)\leq \int_{\Omega} f(x)\phi(x)\dx.$$
Similarly, $u$ is said to be a weak supersolution, if $-u$ is a subsolution and a weak solution if it is both sub and supersolution.

The study of the logarithmic Laplacian was initiated by Chen and Weth \cite{CW19}, who established a small domain maximum principle, Faber-Krahn inequality for $\Ld$ and  first-order asymptotics for the principal eigenvalues and eigenfunctions of $(-\Delta)^s$ as $s \to 0^+$. Since then, there has been a growing body of work on $\Ld$. For instance, Chen and V\'eron \cite{CV23} investigated the eigenvalues of $\Ld$,
Feulefack, Jarohs, and Weth \cite{FJW22} studied convergence of higher eigenvalues and eigenfunctions of $(-\Delta)^s$ as $s\to 0^+$, Jarohs and Weth \cite{JW19} proved a strong maximum principle, and Chen and V\'eron \cite{CV24} studied parabolic equations associated with $\Ld$. For the asymptotic behavior of solutions to $(-\Delta)^s$ as $s \to 0^+$, we refer the reader to Hern\'andez-Santamar\'ia and Salda\~na \cite{HSS}, Jarohs, Salda\~na, and Weth \cite{JSW20}, and Jarohs, Sen, and Weth \cite{JSW26}. Optimal boundary regularity was considered by Hern\'andez-Santamar\'ia, L\'opez-R\'ios and Salda\~na \cite{HSLS}.
 Furthermore, we mention the recent work \cite{DJS26} on the logarithmic $p$-Laplacian, obtained as the derivative of $(-\Delta_p)^s$ at $s = 0$.

The main theme of this paper centers on the following two fundamental questions:
\begin{enumerate}
    \item[(A)] Under what conditions on $f$ does $u$ become a classical solution of \eqref{E1.2} in $\Omega$?
    \item[(B)] Does the operator $\Ld$ satisfy a Harnack inequality?
\end{enumerate}

It is well known that $\Ld u$ is defined classically whenever $u$ is Dini continuous (see \cite[Proposition~1.3]{CW19}). Therefore, to address question~(A), it is natural to seek conditions on $f$ that guarantee the Dini continuity of $u$. In \cite{KM17}, Kassmann and Mimica showed that if $f \in L^\infty(\Omega)$, then the solution $u$ is continuous in $\Omega$ with a $\beta$-log-H\"older modulus of continuity for some $\beta \in (0, 1)$. Up-to-the-boundary continuity for Dirichlet problems was subsequently established by Chen and Weth \cite[Theorem~1.11]{CW19}. Furthermore, Feulefack and Jarohs \cite[Theorem~1.6]{FJ23} proved that if $f \in C^{2m}(\Omega)$, then $u \in H^m_{\rm loc}(\Omega)$; in particular, $u$ is smooth in $\Omega$ provided $f$ is smooth in $\Omega$. We also mention the work \cite{JKW25}, where local boundedness and a uniform modulus of continuity were obtained for a large class of weakly singular operators.

Since a $(1+\beta)$-log-H\"older function for $\beta > 0$ is inherently Dini continuous (and thus yields a classical solution to $\Ld u = f$), Chang-Lara and Salda\~na \cite{CLS24} investigated the existence of classical solutions with Dirichlet boundary data that are $(1+\beta)$-log-H\"older in the interior. The proofs in \cite{CLS24} rely on the Fredholm alternative, where the Dirichlet boundary data plays a key role in defining the mapping between the underlying function spaces.

In this paper, we study interior regularity regardless of boundary conditions. Our first primary result establishes that if the source term is H\"older continuous, the solution is likewise H\"older continuous.
\begin{thm}[H\"older regularity]\label{T1.1}
Let $n \geq 2$, $u \in L^\infty(\Rn) \cap \cV (\Omega)$ be a weak solution to \eqref{E1.2}.
Assume that $f \in C^{0,\upkappa}(\bar\Omega)$ for some $\upkappa \in (0,1)$. Then $u\in C^{0,\upkappa}_{\rm loc} (\Omega)$. Furthermore, for any $\Omega_1\Subset\Omega$, there exists a constant $C$ satisfying
$$ \sup_{x\neq y, x,y\in\Omega_1}\frac{|u(x)-u(y)|}{|x-y|^{\upkappa}}\leq C\left(\norm{u}_{L^\infty(\Rn)} + [f]_{C^{\upkappa}(\Omega)}\right),$$
where $C$ depends on $n, \upkappa, \Omega_1$ and $\dist(\Omega_1, \partial\Omega)$. Here $[f]_{C^{\upkappa}(\Omega)}$ denotes the 
$\upkappa$-H\"{o}lder seminorm in $\Omega$.
\end{thm}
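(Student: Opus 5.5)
The plan is to use the fact that $\Ld$ acts, up to a bounded zero-order perturbation, like a ``zero-order'' operator. Solutions therefore inherit the regularity of $f$ rather than gaining any. Concretely, for $x\in\Omega$ we write
$$
\Ld u(x)=c_n\int_{B_r}\frac{u(x)-u(x+z)}{|z|^n}\dz + \Big(c_n\log\tfrac1r\,\Big)\cdot 0 + \Big(\rho_n - c_n\int_{r\le|z|<1}\!\!\cdots\Big),
$$
or more usefully, we rearrange the pointwise identity into
$$
\Big(\rho_n + c_n\int_{r\le |z|\le 1}|z|^{-n}\dz\Big)u(x) \;=\; f(x) + c_n\int_{|z|\ge r}\frac{u(x+z)}{|z|^n}\mathbbm{1}_{\{|z|\geq r\}}\,\mathrm{d}z\;-\;c_n\int_{B_r}\frac{u(x)-u(x+z)}{|z|^n}\dz .
$$
The coefficient on the left is $\rho_n+c_n|\mathbb S^{n-1}|\log(1/r)$, which is large for small $r$. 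The idea is that $u$ equals $f$ plus a smoothing convolution of $u$ (kernel $|z|^{-n}\mathbbm 1_{|z|\ge r}$, which is integrable locally away from $0$ and improves regularity by translation invariance), plus a small singular remainder. Since we only have a weak solution, the first step is to regularize. Mollify $u_\varepsilon=u*\eta_\varepsilon$, $f_\varepsilon=f*\eta_\varepsilon$, so that $\Ld u_\varepsilon=f_\varepsilon$ holds classically in $\Omega_\varepsilon$; this uses translation invariance of $\cE$ and the smoothness result of \cite{FJ23}. We then prove the estimate uniformly in $\varepsilon$ and pass to the limit.

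The core step is an a priori estimate via difference quotients. Fix $\Omega_1\Subset\Omega_2\Subset\Omega$ and a cutoff. For $|h|$ small set $w(x)=u(x+h)-u(x)$, so that $\Ld w=f(\cdot+h)-f=:g$ in $\Omega_2$ with $|g|\le [f]_{C^\upkappa}|h|^\upkappa$. Split $w=w\zeta+w(1-\zeta)$ with $\zeta$ a cutoff equal to $1$ on $\Omega_2$. The outside part contributes, on $\Omega_1$, a term bounded by $C\|u\|_\infty$ times a regular function of $x$; this needs further care to get a factor $|h|^\upkappa$. To obtain that factor, we shift the translation onto the kernel: $\int (1-\zeta(y))(u(y+h)-u(y))|x-y|^{-n}\dy$ is rewritten by a change of variables as a difference of smooth kernels, giving $O(|h|\,\|u\|_\infty)$. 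The problem is thus reduced to an equation $\Ld(\zeta w)=\tilde g$ with $\|\tilde g\|_{L^\infty}\le C|h|^\upkappa(\|u\|_\infty+[f]_{C^\upkappa})$ on $\Omega_2$. We then need an $L^\infty$ bound for $\zeta w$ in $\Omega_1$ in terms of $\|\tilde g\|_\infty$ and $\|\zeta w\|_{L^\infty(\Omega_2\setminus\Omega_1)}$-type data, which is again only of size $\|u\|_\infty$, not small.

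The main obstacle is this localization. The naive $L^\infty$ bound for $\Ld v=g$ on a set only gives $\|v\|\lesssim \|g\|+\|v\|_{L^\infty(\text{outside})}$, and the outside values of $w$ are not $O(|h|^\upkappa)$. I would attack it through the rearranged identity above, applied to $w$ with $r=r(|h|)$ chosen small. The outer convolution term, taken over $|z|\ge r$ with the region $|z|\ge \delta=\dist(\Omega_1,\partial\Omega_2)$ handled by the kernel shift, contributes $C(\delta)|h|\|u\|_\infty$. The middle annulus $r\le|z|\le\delta$ contributes at most $c_n|\mathbb S^{n-1}|\log(\delta/r)\sup_{\Omega_2}|w|$. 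The left coefficient is $\rho_n+c_n|\mathbb S^{n-1}|\log(1/r)$, so dividing yields the contraction
$$\sup_{\Omega_1}|w|\le \frac{\log(\delta/r)}{\log(1/r)+\rho_n'}\sup_{\Omega_2}|w|+\frac{C}{\log(1/r)}\big(|h|^\upkappa[f]+|h|\,\|u\|_\infty\big)+(\text{inner } B_r \text{ term}),$$
where the contraction factor is strictly less than $1$ by a fixed amount depending on $\log(1/\delta)$ (using $\rho_n\ge0$, $n\ge2$). Iterating this over a finite chain of nested domains shrinking by $\delta$ each time should convert $\sup|w|\le 2\|u\|_\infty$ into $\sup_{\Omega_1}|w|\lesssim |h|^{\upkappa}$. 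For this to work, the inner $B_r$ term must be controlled, bounded by the current modulus of continuity of $u$ at scale $r$. I would first try to bootstrap from the log-H\"older modulus of \cite{KM17}, improving the exponent step by step until reaching $\upkappa$. Whether the constants close in this iteration is the step I am least sure of.
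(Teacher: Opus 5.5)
\textbf{Main gap: the inner term.} The term $c_n\int_{B_r}\frac{w(x)-w(x+z)}{|z|^n}\dz$ is where the whole difficulty sits, and the mechanism you propose for it does not work.

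\begin{enumerate}
\item Bounding $|w(x)-w(x+z)|$ by $2\omega(|h|)$ is useless, since $|z|^{-n}$ is not integrable at $0$.
\item Bounding it by $2\omega(|z|)$ gives $4\int_0^r\omega(s)\,\frac{ds}{s}$. This is finite only if $u$ is Dini continuous.
\item The modulus $|\log s|^{-\beta}$, $\beta\in(0,1)$, from \cite{KM17} is not Dini, because $\int_0^{1/2}\frac{ds}{s|\log s|^{\beta}}=\infty$.
\end{enumerate}

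So your bootstrap cannot even start, and nothing in your argument gives $\varepsilon$-uniform small-scale control of $u_\varepsilon$. What is missing is a device that handles the small scales without a priori regularity.

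The paper gets this from the Ishii--Lions method applied to viscosity solutions (Proposition~\ref{P2.4}). At a maximum point $(\xb,\yb)$ of $\Phi(x,y)=u(x)-u(y)-L|x-y|^{\upgamma}-2\psi(x)$:
\begin{enumerate}
\item $u$ is replaced by the smooth test function in $B_{\delta_1|\abar|}(\xb)$ and $B_{\delta_1|\abar|}(\yb)$;
\item the cone $\cone$ yields the gain $CL|\abar|^{\upgamma}$;
\item on $\cD_2$, the inequality $\Phi(\xb+z,\yb+z)\le\Phi(\xb,\yb)$ gives a one-sided bound.
\end{enumerate}
Hence only boundedness and continuity of $u$ are used, and the exponent iteration starts from $\upkappa_0=0$.

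\textbf{Secondary problem: the contraction step.} Since $c_n|\mathbb{S}^{n-1}|=2$,
\[
1-\theta=\frac{\rho_n+2\log(1/\delta)}{\rho_n+2\log(1/r)}.
\]
This tends to $0$ when $r=r(|h|)\to0$, so the gap is not a fixed amount depending only on $\delta$.

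Independently of $r$, reducing $\theta^N\cdot 2\norm{u}_\infty$ to $|h|^{\upkappa}$ needs $N\gtrsim\upkappa\log(1/|h|)/(1-\theta)$ iterations, which is unbounded as $h\to0$. Since $N\delta\lesssim\dist(\Omega_1,\partial\Omega)$, the step $\delta$ must shrink with $h$ (roughly like $1/\log^2(1/|h|)$ when $r\approx|h|$). So no fixed finite chain of domains works.

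This part is repairable: the accumulated far-field error $\norm{u}_\infty|h|/(\delta\log(1/\delta))$ is still $o(|h|^{\upkappa})$.

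\textbf{A possible repair within your framework.} You could handle the inner term by absorption:
\begin{enumerate}
\item bound it by $\frac{4}{\upkappa}r^{\upkappa}[u_\varepsilon]_{C^{\upkappa}}$ on a slightly larger set, which is finite for each $\varepsilon$;
\item take $r=|h|$;
\item note that after the iteration it enters with coefficient $\frac{4}{\upkappa(\rho_n+2\log(1/\delta))}$, which is small for small $\delta$;
\item close with a standard absorption argument over shrinking domains.
\end{enumerate}

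Likewise, once some $C^{\alpha}$ bound is known, choosing $r=|h|^{\upkappa/\alpha}$ reaches $\upkappa$ in one step. So ``improving the exponent step by step'' is not the real mechanism. Either way, this absorption or sign-at-a-maximum idea is not yet in your proposal.
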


In our next result we improve the previous regularity by a logarithmic correction.
\begin{thm}[Logarithmic improvement of H\"older regularity]\label{T1.2}
Let $n \geq 2$, $u \in L^\infty(\Omega)\cap \cV (\Omega)$ be a weak solution to \eqref{E1.2} and $f \in C^{0,\upkappa}(\bar\Omega)$ for some $\upkappa \in (0,1)$. Then there exists $\beta=\beta(n) \in (0,1)$ such that for any $\Omega_1\Subset\Omega$ we have the following estimate
$$|u(x) - u(y)| \leq C \left(\norm{u}_{L^\infty(\Omega)} + \norm{u}_{L^1_0} +[f]_{C^{\upkappa}(\Omega)}\right) \frac{|x-y|^{\upkappa}}{\left| \log|x-y| \right|^\beta }\quad  \text{ for all } x,y \in \Omega_1,$$
where the constant $C$ depends on $n, \upkappa, \Omega_1$ and $\dist(\Omega_1, \partial\Omega)$. 
\end{thm}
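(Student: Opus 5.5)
We plan to derive the logarithmic improvement from Theorem~\ref{T1.1} together with the Kassmann--Mimica log-H\"older estimate for bounded right-hand sides. Fix $x_0\in\Omega_1$ and a small $r$, and write $\upkappa$-order increments in the form $u(x_0+h)-u(x_0)$.

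\textbf{Step 1 (localization and normalization).} Let $M:=\norm{u}_{L^\infty(\Omega)}+\norm{u}_{L^1_0}+[f]_{C^{\upkappa}(\Omega)}$. First we reduce to globally bounded $u$. Take a cutoff $\eta\in C^\infty_c(\Omega)$ with $\eta=1$ on a neighborhood $\Omega_2$ of $\bar\Omega_1$, and put $v=\eta u$. On $\Omega_2$ we then have $\Ld v=f+g$, where
\[
g(x)=c_n\int_{\Rn}\frac{(1-\eta(x+z))u(x+z)}{|z|^n}\dz ,
\]
with the integrand vanishing for $|z|<\dist(\Omega_1,\partial\Omega_2)$. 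Hence $g$ is smooth on $\Omega_1$, with all derivatives bounded by $C\norm{u}_{L^1_0}$. This is where the $L^1_0$ norm enters. Theorem~\ref{T1.1} applied to $v$ gives $[u]_{C^{\upkappa}(\Omega_1')}\le CM$ on an intermediate set.

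\textbf{Step 2 (difference quotients).} For $|h|$ small, set
\[
w_h(x)=\frac{v(x+h)-v(x)}{|h|^{\upkappa}}.
\]
This function is bounded by $CM$ near $\Omega_1$. Since $\Ld$ is translation invariant,
\[
\Ld w_h=\frac{(f+g)(\cdot+h)-(f+g)}{|h|^{\upkappa}}=:F_h ,
\]
and $\norm{F_h}_{L^\infty}\le CM$ near $\Omega_1$. The problem is that $w_h$ is only bounded, not small, far away. Kassmann--Mimica then gives a $\beta$-log-H\"older modulus for $w_h$, but only at scales $\gg|h|$, so this alone does not yield the improvement.

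\textbf{Step 3 (iteration at dyadic scales).} The real mechanism has to be that the weak singularity of $\Ld$ yields a logarithmic gain, just as Kassmann--Mimica gain $|\log r|^{-\beta}$ oscillation decay in place of the $r^\alpha$ of the fractional case. Set $\omega(r)=\sup_{|h|\le r}\operatorname{osc}$ of $u$ over balls of radius $r$ centered in $\Omega_1$. We would establish a decay estimate of the form
\[
\operatorname{osc}_{B_{r}}u\le r^{\upkappa}\Bigl(\theta_r\,\sup_{\rho\ge r}\rho^{-\upkappa}\operatorname{osc}_{B_\rho}u\Bigr)+Cr^{\upkappa}|\log r|^{-\beta}M.
\]
To do so, we would rescale $u_r(y)=r^{-\upkappa}\bigl(u(x_0+ry)-u(x_0)\bigr)$ and note that $\Ld$ rescales as $\Ld(u(r\cdot))=(\Ld u)(r\cdot)-2\log r\,u(r\cdot)$ up to constants. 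The term $-2\log r$ therefore acts as a large zeroth-order coefficient $\lambda_r=2|\log r|$. The equation for $u_r$ becomes $\Ld u_r+\lambda_r u_r=r^{-\upkappa}(f(x_0+r\cdot)-f(x_0))+\text{const}$, whose right-hand side is bounded by $M(1+|y|^{\upkappa})$. We would then use a barrier or the maximum principle for $\Ld+\lambda$ with $\lambda$ large. This gives $\sup_{B_1}|u_r-c|\lesssim \lambda_r^{-1}\cdot(\text{data}+\text{tail of }u_r)$. The tail is controlled by the $\upkappa$-growth from Step 1, via $\int_{|y|>1}|y|^{\upkappa-n}$ truncated at $|y|\le 1/r$, which contributes $\sim r^{-\upkappa}$ times the mass of the kernel, at the cost of a logarithm. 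Balancing the two produces $|\log r|^{-\beta}$, with $\beta$ depending only on $n$.

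\textbf{Main obstacle.} The hard part is this zeroth-order gain. The far tail $\int_{|y|>1}|u_r(y)|/|y|^n$ of a $\upkappa$-growing function is only controlled up to $\log(1/r)$, which competes exactly with $\lambda_r$. We would first try a Kassmann--Mimica-type iteration on dyadic annuli with geometrically chosen log-scales $r_k=e^{-2^k}$, so that the logarithmic losses sum. We expect that only a fractional power $\beta<1$ survives, consistent with the statement.
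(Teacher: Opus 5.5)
Your Steps 1--2 already contain what is needed, but you abandon Step 2 for a wrong reason. Step 3, which is meant to replace it, is a programme rather than a proof. The quantity $\theta_r$ is never defined, the bound $\sup_{B_1}|u_r-c|\lesssim\lambda_r^{-1}(\cdots)$ is only asserted, and you yourself leave the ``zeroth-order gain'' unresolved. As written it also cannot close. First, $u_r=r^{-\upkappa}(u(x_0+r\cdot)-u(x_0))$ is not in $L^1_0(\Rn)$, because a nonzero constant is not and $\Ld$ of a nonzero constant is infinite. So the rescaled equation has no meaning without a further cutoff. Second, as you compute, the tail of $u_r$ over $1<|y|<1/r$ has size $r^{-\upkappa}$. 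Hence $\lambda_r^{-1}$ times the tail is of order $r^{-\upkappa}/|\log r|$, which is useless for a sup bound.

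The missing idea is that Kassmann--Mimica is not limited to scales $\gg|h|$. Theorem~\ref{T2.2}, applied to a cutoff $\chi w_h$, gives $|w_h(x)-w_h(y)|\le CM|\log|x-y||^{-\beta}$ for \emph{all} $x,y$ in a fixed smaller ball, with $C$ independent of $h$. The reasons are as follows. The local bound on $w_h$ comes from Step 1. $\Ld w_h$ is bounded by translation invariance. The far-field values of $w_h$, which are not uniformly bounded in general, enter only through $\chi(x)\int_{|z|\ge r/2}w_h(x+z)K(z)\dz$. After a change of variables this equals $|h|^{-\upkappa}\int u(z)[K(x+h-z)-K(x-z)]\dz$ plus a thin-annulus term, so it is $O(|h|^{1-\upkappa})$ by \eqref{kernel-der}. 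Taking $y=x+h$ gives the second-difference bound $|u(x+2h)-2u(x+h)+u(x)|\le CM|h|^{\upkappa}|\log|h||^{-\beta}$. You never need $w_h$ itself to be small. With $2^m|h|\sim r$, write
\[
u(x+h)-u(x)=2^{-m}\bigl(u(x+2^mh)-u(x)\bigr)-\sum_{j=1}^{m}2^{-j}\bigl[u(x+2^{j}h)-2u(x+2^{j-1}h)+u(x)\bigr].
\]
The first term is $O(M|h|/r)$. The $j$-th summand is at most $CM\,2^{-j(1-\upkappa)}|h|^{\upkappa}|\log(2^{j-1}|h|)|^{-\beta}$. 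Since $\upkappa<1$, the geometric factor beats the slowly varying logarithm, and the sum is $\le CM|h|^{\upkappa}|\log|h||^{-\beta}$. This is the paper's route: Lemma~\ref{L3.2}, then the dyadic argument of Lemma~\ref{L3.3}, with Theorem~\ref{T3.4} doing the job of your Step 1. It requires no rescaling and no zeroth-order analysis.
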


Our proofs of Theorems~\ref{T1.1} and~\ref{T1.2} rely on the Ishii-Lions approach introduced in \cite{IL90}. Thanks to the interior continuity result of \cite{KM17}, we show that every weak solution is also a viscosity solution (see Proposition~\ref{P2.4}). Once in the viscosity setting, we can apply the nonlocal Ishii-Lions method. Recently, the Ishii-Lions technique has been successfully employed to address the regularity of several nonlocal operators (see, e.g., \cite{BCI11,BCCI12,BS25,BT25,BT24,BQT25,CGT22}). With Theorem~\ref{T1.1} established, Theorem~\ref{T1.2} follows from \cite{KM17} with suitable modifications.

Our next main result is the Harnack inequality.
\begin{thm}[Harnack inequality]\label{T1.3}
Let $n \geq 2$, $u \in L^\infty(\Omega) \cap  \cV (\Omega)$ be a weak solution to 
$$\left\{\begin{array}{ll}
\Ld u = f & \text{in}\; \Omega,
\\[2mm]
u\geq 0 & \text{in}\; \Omega,
\end{array}
\right.$$
where $f\in L^\infty(\Omega)$. Suppose that $B_{3r}\subset \Omega$ for some $r\in (0, \frac{1}{4})$.
Then for some constant $C=C(n, r)$ we have
$$\sup_{B_{\frac{r}{8}}} u \leq C\left[\inf_{B_{\frac{r}{8}}} u + \norm{f}_{L^\infty(B_{r})} + \int_{B^{c}_{3r}} \frac{u_{-}(z)}{|z|^n}\dz \right].$$
\end{thm}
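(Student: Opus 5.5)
Since $u$ is continuous (Kassmann--Mimica \cite{KM17}), Proposition~\ref{P2.4} lets us treat $u$ as a viscosity solution, and the estimate can be proved by barrier arguments, in the spirit of the Harnack inequality for geometrically stable operators. I would not try to adapt a De Giorgi/Moser iteration. The operator is too weakly singular for Sobolev-type gains, so instead I will exploit the structure
$$\Ld u(x)= c_n\int_{B_1}\frac{u(x)-u(x+z)}{|z|^n}\dz - c_n\int_{B_1^c}\frac{u(x+z)}{|z|^n}\dz+\rho_n u(x).$$
Near-diagonal averaging makes the operator \emph{almost} a comparison with a weighted average. Since $\int_{B_1\setminus B_\varepsilon}|z|^{-n}\dz=|S^{n-1}|\log(1/\varepsilon)$, evaluating at an extremal point gives a strong mean-value-type control. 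The key steps are the following.

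\textbf{Step 1 (weak Harnack from below).} Let $x_0\in \bar B_{r/8}$ be a minimum point of $u$ over $\bar B_{r/8}$. If $x_0$ were also a global minimum on $B_{3r}$, then plugging in a test function touching from below would give
$$c_n\int_{B_1}\frac{u(x_0+z)-u(x_0)}{|z|^n}\dz \le -f(x_0)+\rho_n u(x_0)-c_n\int_{B_1^c}\frac{u(x_0+z)}{|z|^n}\dz.$$
This would bound the mass of $u$ on $B_{r}$ by $C[\inf u+\|f\|_\infty+\text{tail}]$. Here the tail term is controlled by the $u_-$ integral outside $B_{3r}$ together with the positive part, which has a good sign. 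To make this rigorous without assuming a global minimum, I would use a cut-off barrier $\varphi$ (a smooth bump equal to $1$ on $B_{r/4}$, supported in $B_{r/2}$) and compare $u$ with $t\varphi$ for the largest $t$ with $u\ge t\varphi$ in $B_{r}$. At the touching point the viscosity inequality gives
$$\int_{B_r}\frac{u(z)}{r^n}\,\dz\lesssim C(r)\Bigl[\inf_{B_{r/8}}u+\|f\|_\infty+\int_{B_{3r}^c}\frac{u_-}{|z|^n}\Bigr].$$
This is the $L^1$ weak Harnack inequality.

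\textbf{Step 2 ($L^\infty$--$L^1$ bound from above).} Let $x_1$ be a near-maximum point in $B_{r/8}$ and use the subsolution inequality there. The integral $\int_{B_1}\frac{u(x_1)-u(x_1+z)}{|z|^n}\dz$ restricted to $B_r$ is at least $u(x_1)\cdot c\log(1/\varepsilon)$ minus terms involving averages of $u$. Controlling this requires an iterative doubling/localization argument: either $u(x_1)$ is bounded by $C\fint_{B_r}u$ plus data, or there is a nearby point with larger value, and iterating produces a contradiction with boundedness of $u$. This is the Krylov--Safonov-free ``sup bound by averages'' technique used for zero-order-type operators, using that $u\in L^\infty$ and is continuous. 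I expect this to be the main obstacle. The kernel $|z|^{-n}$ is only logarithmically singular, so the gain per scale is only logarithmic, and one must make the scales depend on $r$ (hence $C=C(n,r)$). The negative far-field part $-c_n\int_{B_1^c}u/|z|^n$ and the zero-order $\rho_n u$ also have the wrong sign and must be absorbed using $r<1/4$.

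\textbf{Step 3.} Combine Steps 1 and 2: $\sup_{B_{r/8}}u\le C\fint_{B_r}u+C\|f\|_\infty+C\,\mathrm{tail}\le C[\inf_{B_{r/8}}u+\|f\|_{L^\infty(B_r)}+\int_{B_{3r}^c}u_-|z|^{-n}]$, adjusting radii so that all comparison balls stay inside $B_{3r}\subset\Omega$.
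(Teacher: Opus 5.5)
Your Steps 1 and 3 follow the paper. Step 1 is Theorem~\ref{weak-har}: touch $u$ from below by $t\eta$ with $t$ maximal. Step 3 is Theorem~\ref{T4.3}: work with $u_+$ and move the $u_-$ tail to the right-hand side.

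\textbf{The gap is Step 2.} You flag it yourself as the main obstacle, and it carries the whole upper half of the Harnack inequality. Your dichotomy (``either $u(x_1)$ is bounded by averages plus data, or there is a nearby point with a larger value'') is never made quantitative, and as stated the iteration cannot close. Two things are missing.
\begin{enumerate}
\item \emph{The scales.} To get blow-up while the points stay in $B_{r/4}$, the new value must exceed $(1+\delta)M$, not merely $M$, and it must be found within radius $r_0=(c_0M)^{-1/n}$. This choice makes the local mass term $r_0^{-n}\|u_+\|_{L^1_0}$ equal to $c_0M$, a small multiple of $M$. It also makes the radii summable along $M_j=(1+\delta)^jM_0$, with $\sum_j r_j<r/16$ once $M_0$ is large.
\item \emph{A valid evaluation point.} You cannot ``use the subsolution inequality'' at a near-maximum point. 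In the viscosity framework it is only available where a test function touches from above, and a near-maximum is not such a point.
\end{enumerate}
The paper proves the needed claim (if $u(x_0)\ge M$ and $\sup_{B_{r_0}(x_0)}u\le(1+\delta)M$, contradiction) as follows. It rescales $w(x)=u(x_0+2r_0x)$ and applies the weak Harnack inequality to $v_+$, where $v=(1+\delta)M\eta-w$. Since $v\ge0$ on $B_{1/2}$, $v_+$ is a supersolution up to an error $-Cc_0M$ coming from $v_-$. Then $v_+(0)\le\delta M$ forces $\int_{B_{1/2}}v_+\le C(\delta+c_0)M$, hence $\fint_{B_{r_0}(x_0)}u\ge M/2$, which contradicts $\|u\|_{L^1_0}=1$ once $c_0$ is small.

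Your subsolution idea can also be completed, but only with these same ingredients. Under the same hypothesis, maximize $u-A|x-x_0|^2$ over $\bar B_{r_0/2}(x_0)$ with $A=16\delta M/r_0^2$.
\begin{itemize}
\item The maximizer $y$ satisfies $|y-x_0|\le r_0/4$ and $u(y)\ge M$, so the paraboloid touches $u$ from above on $B_{r_0/4}(y)$.
\item Its local contribution is $\ge -C\delta M$.
\item The annulus $B_1\setminus B_{r_0/4}$ gives a gain $\ge cM$, against a loss $\le Cr_0^{-n}\|u_+\|_{L^1_0}=Cc_0M$.
\item The far field costs $O(\|u_+\|_{L^1_0})$.
\end{itemize}
Taking $\delta$ and $c_0$ small then bounds $M$. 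Only an order-one gain at a single scale is used, so your worry that the gain per scale is ``only logarithmic'' is a misdiagnosis.

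Two smaller corrections.
\begin{itemize}
\item In Step 2 the term $\rho_n u$ has the favorable sign and is simply dropped, as the paper does using $\rho\ge0$ and $u_+\ge0$.
\item The positive far field is not ``absorbed using $r<\frac14$''; it is bounded by $\|u_+\|_{L^1_0}$. So Step 1 must deliver the full weighted norm $\|u_+\|_{L^1_0}$, as Theorem~\ref{weak-har} does, not only $\int_{B_r}u$.
\end{itemize}
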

For the proof of the Harnack inequality, we do not work directly with weak solutions. This is partly because the standard method requires a Sobolev-type embedding for this class of operators, which is unavailable here. Instead, we again rely on Proposition~\ref{P2.4} to pass to viscosity solutions and then, exploiting linearity, adapt the approach of \cite{DRV22} to establish our result. We also point out that our proofs of Theorems~\ref{T1.1}--\ref{T1.3} require the zeroth-order coefficient $\rho_n$ to be non-negative, which is why we restrict our attention to $n \ge 2$.

The rest of the paper is organized as follows. In the next section, we recall several key preliminary results and introduce the notion of viscosity solutions. In Section~\ref{Holder}, we prove H\"older regularity and its logarithmic improvement, while the Harnack inequality is established in Section~\ref{Harnack}.

\section{Viscosity solution and preliminary results}
In this section, we introduce the notion of viscosity solution and provide its relation with the weak solution. We also recall a few results from the existing 
literature which will be helpful for us. From now on, we consider a slightly more general nonlocal kernel $K:\Rn\setminus\{0\}\to \R$ which is symmetric (that is, $K(y)=K(-y)$) and satisfies
\begin{equation}\label{kernel}
\kappa^{-1} |y|^{-n}\leq K(y)\leq \kappa |y|^{-n}\quad \text{for}\; y\neq 0,
\end{equation}
for some $\kappa>0$. We also assume that $K\in C^1(\Rn\setminus\{0\})$ and satisfies
\begin{equation}\label{kernel-der}
|\grad K(z)|\leq \kappa |z|^{-n-1}\quad \text{for}\; z\neq 0.
\end{equation}
The bilinear form $\cE(\cdot, \cdot)$ is also modified in an obvious fashion to $\cE_K(\cdot, \cdot)$ and the notion of 
weak sub/super solutions should also be understood with respect to $\cE_K(\cdot, \cdot)$. Furthermore, we use the notation
$L_K$ to define the operator
$$L_K u(x):= \int_{\Rn} \left(u(x) \mathbbm{1}_{B_1} (z) - u(x+z)\right) K(z) \dz.$$
Note that we do not include $\rho_n$ in the above definition of the operator. In particular, $L_K+\rho_n$ would correspond to the bilinear form
$\cE_K(\cdot, \cdot)$.

We begin with the following result of Kassmann and Mimica \cite[Theorem 3]{KM17}.
\begin{thm}\label{T-KM}
Let $\widehat{K}:\Rn\setminus\{0\} \to[0, \infty)$ be a symmetric, measurable function satisfying
$$\kappa^{-1}\mathbbm{1}_{B_1}(y)|y|^{-n}\leq \widehat{K}(y)\leq \kappa \mathbbm{1}_{B_1}(y)|y|^{-n}$$
for some $\kappa>0$. Then there exists a $\beta=\beta(n,\kappa)\in (0, 1)$ and $C=C(n, \kappa, r)>0$, so that for any
$u\in C^2_b(\Rn)$ solving
$$\int_{\Rn} (u(x)-u(x+z))\widehat{K}(z)\dz=f\quad \text{in} \; B_r$$
for $r\leq \frac{1}{2}$ and some $f\in L^\infty(B_r)$, we have
\begin{equation*}\label{KM log}
|u(x)-u(y)|\ \leq C \left(\norm{u}_\infty + \norm{f}_{L^\infty(B_r)}\right) \left |\log|x-y| \right|^{-\beta}\quad \text{for all}\; x, y\in B_{\frac{r}{4}}.
\end{equation*}
\end{thm}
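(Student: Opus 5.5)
Since Theorem~\ref{T-KM} is quoted from \cite[Theorem 3]{KM17}, we only outline how we would prove it: a De Giorgi--Krylov--Safonov type oscillation decay, run along a sequence of radii that shrink \emph{doubly exponentially}, i.e.\ $r_{k+1}\approx r_k^{2}$, rather than geometrically. The reason is that the truncated kernel $\widehat K\asymp \mathbbm{1}_{B_1}|y|^{-n}$ is scale invariant only up to a logarithm: its mass on an annulus $\{\rho<|y|<R\}$ is comparable to $\log(R/\rho)$. A fixed proportion of oscillation decay can therefore be expected only when passing from $B_R$ to $B_{R^2}$, where $\log(1/\rho)$ doubles. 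Iterating $\operatorname{osc}_{B_{r_{k+1}}}u\le(1-\theta)\operatorname{osc}_{B_{r_k}}u+C\|f\|_\infty$ with $\log(1/r_k)\sim 2^k$ gives $\operatorname{osc}_{B_\rho}u\lesssim |\log\rho|^{-\beta}$ with $2^{-\beta}=1-\theta$. Translating the center and the assumption $r\le\frac12$ then yield the estimate for $x,y\in B_{r/4}$.

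\emph{Step 1 (a measure-to-pointwise lemma).} Suppose $u\ge 0$ in $B_R$, $R\le\frac12$, and $|\{u\ge 1\}\cap B_R|\ge\frac12|B_R|$. Suppose also that the negative part of $u$ outside $B_R$ (but within the unit range of $\widehat K$) is small in the weighted sense $\int_{B_1(x)\setminus B_R}u_-(x+z)|z|^{-n}\,dz\le\eta$, and that $\|f\|_\infty\le\eta$. The claim is that then $u\ge\delta$ on $B_{R^2}$. To prove it we would minimize $u+\psi$, where $\psi$ is a smooth barrier vanishing on $B_{R^2}$ and equal to a fixed constant $\delta_0$ outside $B_{2R^2}$. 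At the minimum point $x_0$ we evaluate the equation classically, which is allowed since $u\in C^2_b$. The good set $\{u\ge1\}\cap B_R$ lies at distance $\lesssim R$ from $x_0$, so it contributes at least $c\,|B_R|R^{-n}(1-u(x_0))\ge c'$ with a negative sign. The $\psi$-part is controlled by scale invariance: the near region gives $O(\delta_0)$, and the annulus $2R^2<|z|<1$ gives $O(\delta_0\log(1/R))$. Choosing the barrier height and the annuli appropriately (the fixed ratio $\log(1/R^2)/\log(1/R)=2$ is what makes this work), and using $f\ge-\eta$, we obtain a contradiction unless $u(x_0)\ge\delta$.

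\emph{Step 2 (oscillation decay and tail bookkeeping).} Set $M_k=\sup_{B_{r_k}}u$ and $m_k=\inf_{B_{r_k}}u$. Apply Step~1 to whichever of $(u-m_k)/(\tfrac12(M_k-m_k))$ or $(M_k-u)/(\tfrac12(M_k-m_k))$ has the good set occupying half of $B_{r_k}$. Outside $B_{r_k}$, this normalized function is negative by at most the amount by which the oscillation on the larger balls $B_{r_j}$, $j<k$, exceeds the current one. We then plug in the inductive hypothesis $\operatorname{osc}_{B_{r_j}}u\le C_0 2^{-\beta j}$. The kernel mass on $B_{r_{j-1}}\setminus B_{r_j}$ is $\sim\log(r_{j-1}/r_j)\sim 2^{j}$, while the normalized function there is only of size $2^{\beta(k-j)}$. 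After dividing by the normalization, the tail should be summable to something small once $\beta$ is small. The contribution from $|z|\ge1$ is absent because $\widehat K$ is truncated, and $\|u\|_\infty$ controls the first few scales.

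The main obstacle is Step~2's tail estimate. With a logarithmic kernel, the annular masses grow with $j$ exactly as fast as the radii shrink, so the weighted sum $\sum_j 2^{j}\,2^{\beta(k-j)}$ against the normalization is not obviously small. The first thing we would try is to adjust the ratio $\log r_{k+1}/\log r_k=N$ to a large fixed $N$ rather than $2$, giving $r_{k+1}=r_k^{N}$. Then the current annulus has mass $\sim N\log(1/r_k)$, which dominates all earlier annuli together, and the tail becomes a small fraction of the Step~1 gain, at the price of a smaller $\beta\sim\log_N(1/(1-\theta))$. This $\beta$ depends only on $n$ and $\kappa$, which is consistent with the statement.
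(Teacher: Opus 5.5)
\textbf{There is a genuine gap, and it sits in Step~1.} The paper gives no proof of this statement; it quotes it from \cite[Theorem~3]{KM17}. Your global picture, with $\log(1/r_{k+1})$ a fixed multiple of $\log(1/r_k)$, is the right one. But Step~1 uses the wrong measure for the good set.

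Under the Lebesgue-density hypothesis $|\{u\ge 1\}\cap B_R|\ge\frac12|B_R|$, the good set contributes only $-c|B_R|R^{-n}=O(1)$ at the minimum point $x_0$, as you compute yourself. Two competing terms are of order $\log(1/R)$:
\begin{enumerate}
\item the barrier term $O(\delta_0\log(1/R))$, which you note;
\item the exterior contribution, which can be as large as $u(x_0)\int_{\{x_0+z\notin B_R,\ |z|<1\}}\widehat K(z)\,dz\asymp u(x_0)\log(1/R)$ even when $u\ge 0$ everywhere (take $u=0$ off $B_R$). You do not account for this term.
\end{enumerate}
So the inequality at $x_0$ gives a contradiction only if $\delta,\delta_0\lesssim 1/\log(1/R)$. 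The ratio $\log(1/R^2)/\log(1/R)=2$ never enters the gain, so no choice of barrier or annuli repairs this.

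This is not an artifact of the estimate. Your argument uses only $f\ge-\eta$, so it would apply to the function $v$ with $v=1$ on $G=B_R\setminus B_{R/2}$, $v=0$ off $B_R$, and $v$ $\widehat K$-harmonic in $B_{R/2}$. This $v$ satisfies $v\ge 0$, $v\ge 1$ on more than half of $B_R$, and $\int(v(x)-v(x+z))\widehat K(z)\,dz\ge 0$ throughout $B_R$ (on $G$ because $v\le 1$). Yet comparison with $\chi+\frac{C}{\log(1/R)}\eta$, for suitable cut-offs $\chi,\eta$ at scale $R$, gives $v\lesssim 1/\log(1/R)$ on $B_{R/4}$.

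\textbf{The same defect breaks Step~2.} After normalizing by the current oscillation, the function can be as negative as $-2(\mu^{-1}-1)$, with $\mu=1-\theta$, on the entire previous annulus $B_{r_{k-1}}\setminus B_{r_k}$. That annulus has kernel mass $\asymp\log(r_{k-1}/r_k)\to\infty$, so the tail hypothesis ``$\le\eta$'' of Step~1 fails for large $k$, whatever $N$ is. Your remedy compares the tail with ``the Step~1 gain'' as though that gain were of order $N\log(1/r_k)$. Your Step~1 only delivers $O(1)$.

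\textbf{The missing idea is to measure the good set with the kernel, over the whole range of scales of one step.} With $r_{k+1}=r_k^N$ and $A_k=B_{r_k}\setminus B_{r_{k+1}}$, assume
$\int_{\{v\ge 1\}\cap A_k}|y|^{-n}\,dy\ge\frac12\int_{A_k}|y|^{-n}\,dy$.
One of the two half-level sets of $u$ always satisfies this, so the dichotomy survives. Your barrier argument, with $\psi$ at scale $r_{k+1}$, then yields a gain $\gtrsim(N-1)\log(1/r_k)$ at $x_0$. This gain dominates:
\begin{enumerate}
\item the barrier cost $C\delta_0(N-1)\log(1/r_k)$, for small $\delta_0$;
\item the exterior term $C\delta\log(1/r_k)$;
\item after normalizing by $\mu^k$, the tail $\lesssim\sum_{j<k}(\mu^{j-k}-1)(N-1)N^{j}\log(1/r_0)$, which is at most a fraction $C/(\mu N-1)$ of the gain.
\end{enumerate}
Hence $\theta$ is independent of $N$. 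One then fixes $N$ large and obtains $\beta=\log(1/\mu)/\log N$.

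The source term is compared with this same gain, so it enters as $C\|f\|_\infty/\log(1/r_k)$. By contrast, the recursion $\mathrm{osc}_{B_{r_{k+1}}}u\le(1-\theta)\,\mathrm{osc}_{B_{r_k}}u+C\|f\|_\infty$ that you wrote only gives the bound $C\|f\|_\infty/\theta$, which does not tend to zero.
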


Applying Theorem~\ref{T-KM} and a routine approximation argument we obtain the following interior regularity.
\begin{thm}\label{T2.2}
Let $u\in L^\infty(\Rn) \cap \cV(B_{2r}) $ be a weak solution to $L_K u + \rho(x) u(x) = f$ in $B_{2r}$, where $\rho$ and $f$ are bounded measurable functions. Then $u \in C(B_{2r})$ and there exist constants  $C=C(n,\kappa, r, \norm{\rho}_\infty)>0$ and $\beta=\beta(n,\kappa) \in (0,1)$ 
such that  for all $x,y \in B_{r/4}$ we have
$$|u(x) - u(y)| \leq C\left(\norm{u}_\infty + \norm{f}_{L^\infty(B_{2r})} + \int_{\Rn} \frac{|u(z)|}{(1+|z|)^n}\dz\right) |\log|x-y||^{-\beta}.$$
\end{thm}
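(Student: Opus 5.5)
We reduce Theorem~\ref{T2.2} to Theorem~\ref{T-KM} in three steps: move the long-range part of $L_K$ and the zeroth-order term to the right-hand side, pass to smooth solutions by mollification, and recover the weak solution $u$ as a locally uniform limit.

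\textbf{Step 1: reduction to a truncated kernel.} Set $\widehat K = K\mathbbm{1}_{B_1}$. By \eqref{kernel}, $\widehat K$ satisfies the hypothesis of Theorem~\ref{T-KM} with the same $\kappa$. Formally, for $x\in B_{2r}$ with $r\le \frac12$ (the case $r>\frac12$ follows by covering),
$$\int_{\Rn}(u(x)-u(x+z))\widehat K(z)\dz = f(x) - \rho(x)u(x) + \int_{|z|\ge 1} u(x+z)K(z)\dz =: g(x).$$
The last integral is bounded, uniformly in $x\in B_{2r}$, by
$$\kappa\int_{|z|\ge1}\frac{|u(x+z)|}{|z|^n}\dz \le C(n,\kappa)\int_{\Rn}\frac{|u(y)|}{(1+|y|)^n}\dy,$$
since $1+|y|\le 3|z|$ when $|z|\ge 1$ and $|x|\le 1$. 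Hence
$$\norm{g}_{L^\infty(B_{2r})}\le \norm{f}_\infty+\norm{\rho}_\infty\norm{u}_\infty + C\int_{\Rn}\frac{|u(y)|}{(1+|y|)^n}\dy.$$
On the weak side, the same identity holds in the form $\cE_{\widehat K}(u,\phi)=\int g\phi$ for every $\phi\in C^\infty_c(B_{2r})$. This follows by splitting $\cE_K$ into the $|x-y|\le1$ part, which is exactly $\cE_{\widehat K}$, and the $|x-y|>1$ part, which is $-\iint u(x)\phi(y)K$ and therefore contributes $\int\phi(y)\int_{|z|>1}u(y+z)K(z)\dz\dy$.

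\textbf{Step 2: regularization.} Theorem~\ref{T-KM} applies only to $u\in C^2_b(\Rn)$, so we mollify: $u_\varepsilon=u*\eta_\varepsilon$ and $g_\varepsilon=g*\eta_\varepsilon$. Since $u\in L^\infty$, $u_\varepsilon\in C^\infty_b(\Rn)$ with $\norm{u_\varepsilon}_\infty\le\norm u_\infty$, so $L_{\widehat K}u_\varepsilon$ is defined pointwise. Because $\widehat K$ is \emph{not} translation invariant in the $x$-variable only through the kernel, the operator $L_{\widehat K}$ (a convolution-type operator with a fixed kernel) commutes with translations, hence with mollification. Consequently $L_{\widehat K}u_\varepsilon = g_\varepsilon$ weakly in $B_{2r-\varepsilon}$. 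For smooth $u_\varepsilon$ the weak and pointwise formulations agree after integrating by parts against test functions, so the equation holds pointwise in $B_{2r-\varepsilon}$, and $\norm{g_\varepsilon}_{L^\infty(B_{2r-\varepsilon})}\le \norm{g}_{L^\infty(B_{2r})}$.

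Applying Theorem~\ref{T-KM} in $B_r$ (for $\varepsilon<r$) gives
$$|u_\varepsilon(x)-u_\varepsilon(y)|\le C\bigl(\norm u_\infty+\norm g_\infty\bigr)|\log|x-y||^{-\beta}\quad\text{for } x,y\in B_{r/4},$$
uniformly in $\varepsilon$.

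\textbf{Step 3: passage to the limit.} This uniform log-modulus, together with $\norm{u_\varepsilon}_\infty\le\norm u_\infty$, makes the family $\{u_\varepsilon\}$ equicontinuous and bounded on $B_{r/4}$. By Arzel\`a--Ascoli, a subsequence converges locally uniformly. Since $u_\varepsilon\to u$ a.e., the limit is a continuous representative of $u$, and it inherits the estimate. Inserting the bound on $\norm{g}_\infty$ from Step 1 yields the stated inequality with $C=C(n,\kappa,r,\norm\rho_\infty)$. Continuity in all of $B_{2r}$ follows by applying the same argument on small balls centered at arbitrary points of $B_{2r}$, using translation invariance.

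\textbf{Main obstacle.} The delicate point is Step 2: justifying that the mollified function is a genuine pointwise solution of the truncated equation. The difficulty is that $\rho u$ is merely bounded measurable and $u$ only lies in $\cV$. The fix is to keep $\rho u$ inside $g$ as a bounded right-hand side, so that only the translation-invariant operator $L_{\widehat K}$ is mollified. One then checks via Fubini that $\cE_{\widehat K}(u_\varepsilon,\phi)=\cE_{\widehat K}(u,\phi*\eta_\varepsilon)$. This identity is legitimate because $\widehat K$ is integrable against $|z|^2$ near $0$ and $\phi$ is smooth.
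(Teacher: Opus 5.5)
Your proposal is correct and follows the paper's proof. You move $\rho u$ and the long-range tail $\int_{|z|\ge 1}u(x+z)K(z)\dz$ (bounded because $u\in L^1_0(\Rn)$) to the right-hand side, mollify using the translation invariance of the bilinear form, apply Theorem~\ref{T-KM} to $u^\varepsilon$, and let $\varepsilon\to 0$. The only differences are cosmetic: you split off the tail before mollifying rather than after (the two operations commute), and you spell out the Fubini and Arzel\`a--Ascoli details that the paper leaves implicit.
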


\begin{proof}
Since $u\in L^\infty(\Rn)$, moving $\rho u$ to the right hand side, we may assume that $\rho=0$. Moreover, it is enough to complete the proof assuming
$r\leq \frac{1}{4}$. For a larger $r$ we can employ a standard covering argument to conclude the result.

Define $\varphi(x) = \phi(x+z) \eta_\varepsilon(z)$, where $\phi \in C_c^\infty(B_{r})$ and $\eta_\varepsilon\geq 0$ is the standard (scaled) mollifier. We denote $\eta_\varepsilon$ by $\eta$. By the definition of weak solution
\begin{align*}
&\frac{1}{2} \iint_{\stackrel{x,y \in \Rn}{\text{\tiny $|x-y|\leq 1$}}} (u(x)-u(y))(\varphi(x)-\varphi(y)) K(x-y)\dx \dy 
\\
&\qquad
-\iint_{\stackrel{x,y \in \Rn}{\text{\tiny $|x-y|> 1$}}} u(x)\varphi(y) K(x-y) \dx \dy = \int_{\Rn} f(x) \varphi(x) \dx.
\end{align*}
Applying change of variables $x\mapsto x-z, y \mapsto y-z$ we have
\begin{align*}
&\frac{1}{2} \iint_{\stackrel{x,y \in \Rn}{\text{\tiny $|x-y|\leq 1$}}} (u(x-z)-u(y-z))\eta(z)(\phi(x)-\phi(y))K(x-y)\dx \dy 
\\
&\qquad- \iint_{\stackrel{x,y \in \Rn}{\text{\tiny $|x-y|> 1$}}} u(x-z)\eta(z)\phi(y) K(x-y)\dx \dy = \int_{\Rn} f(x-z)\eta(z) \phi(x) \dx.
\end{align*}
Now, integrating over $\Rn$ with respect to $z$ and applying the dominated convergence theorem we get
\begin{align*}
&\frac{1}{2} \iint_{\stackrel{x,y \in \Rn}{\text{\tiny $|x-y|\leq 1$}}} (u^\varepsilon(x)-u^\varepsilon(y))(\phi(x)-\phi(y)) K(x-y) \dx \dy 
\\
&\qquad - \iint_{\stackrel{x,y \in \Rn}{\text{\tiny $|x-y|> 1$}}} u^\varepsilon(x)\phi(y) K(x-y) \dx \dy = \int_{\Rn} f^\varepsilon(x) \phi(x) \dx,
\end{align*}
where $u^\varepsilon=u\ast\eta^\varepsilon$ and $f^\varepsilon=f\ast\eta^\varepsilon$.
Note that $u^\varepsilon\in C^2_b(\Rn)$ is a solution to
 $$L_{\widehat{K}} u^\varepsilon =f^\varepsilon+\int_{B_1^c}u^\varepsilon(x+z)K(z)\dz,$$
 where $\widehat{K}(y)=K(y)\mathbbm{1}_{B_1}(y)$.
Since  $u\in  L_0^1(\Rn)$ the right side is a bounded function, and hence using  Theorem~\ref{T-KM} and then letting $\varepsilon \rightarrow 0$ we conclude the result.
\end{proof}

Let us now introduce the notion of viscosity solution which will play the central role in our analysis.
\begin{defi}[Viscosity Solution]\label{Defi-vis}
Let $\rho, f : \Omega \to \mathbb{R}$ be continuous functions. A function $u : \Rn \to \mathbb{R}$ is a viscosity subsolution to 
$$
L_K u + \rho u = f \quad \text{in } \Omega
$$
if $u \in L^1_0(\mathbb{R}^n)\cap USC(\Omega)$ and, for every test function $\varphi \in C^{0,\alpha}(\Omega)$ with $\alpha \in (0,1)$ and every point $x_0 \in \Omega$ such that $u(x_0) = \varphi(x_0)$ and $\varphi \ge u$ in some ball $B_\delta(x_0) \subset \Omega$, we have
$$
L_K \varphi_\delta(x_0) + \rho(x_0) \varphi_\delta(x_0) \le f(x_0),
$$
where $\varphi_\delta : \mathbb{R}^n \to \mathbb{R}$ is defined by
$$
\varphi_\delta(y) = 
\begin{cases} 
\varphi(y) & \text{for } y \in B_\delta(x_0), \\[2mm] 
u(y) & \text{for } y \notin B_\delta(x_0). 
\end{cases}
$$
A function $u : \R^n \to \mathbb{R}$ is a viscosity supersolution if $-u$ is a viscosity subsolution. Finally, $u$ is a viscosity solution if it is both a viscosity subsolution and a viscosity supersolution in $\Omega$. 
\end{defi}
We conclude this section by establishing a link between weak and viscosity solutions, enabling the application of viscosity solution results to weak solutions.
\begin{prop}\label{P2.4}
Let $u \in \cV(\Omega) \cap USC(\Omega)$ be a weak subsolution of \eqref{E1.2}, and $f\in C(\Omega)$ . Then $u$ is a viscosity subsolution to \eqref{E1.2}. Similar conclusion holds for weak supersolutions and solutions.
\end{prop}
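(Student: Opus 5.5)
We argue by contradiction, in the standard way used for fractional Laplacians. Fix a test function $\varphi\in C^{0,\alpha}(\Omega)$ and a point $x_0\in\Omega$ with $\varphi(x_0)=u(x_0)$ and $\varphi\ge u$ in $B_\delta(x_0)$. Suppose that
$$L_K\varphi_\delta(x_0)+\rho_n\varphi_\delta(x_0)>f(x_0)+2\epsilon$$
for some $\epsilon>0$. Since $\varphi$ is H\"older near $x_0$, the quantity $L_K\varphi_\delta(x_0)$ is classically defined, because H\"older functions are Dini. The goal is to build a perturbation of $\varphi_\delta$ that is a strict classical supersolution in a tiny ball, touches $u$ from above, and contradicts the weak subsolution inequality via a comparison (weak maximum) principle for $\cE$.

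\textbf{Step 1 (continuity of the operator).} For small $r<\delta$, define $\psi_r:=\varphi_\delta+\theta_r$. Here $\theta_r$ is a smooth bump supported in $B_{r}(x_0)$ with $0\le\theta_r\le \eta_r$ and $\theta_r(x_0)=\eta_r$, where $\eta_r$ is chosen small. We then show that $L_K\psi_r+\rho_n\psi_r\ge f+\epsilon$ holds pointwise in $B_{r'}(x_0)$ for some $r'\ll r$.

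The main obstacle is that $\varphi_\delta$ is only H\"older in $B_\delta$ and merely $L^1_0$ outside it. For $x$ near $x_0$, split $L_K\varphi_\delta(x)$ into three parts:
\begin{enumerate}
\item the region $|z|<\delta/2$, where $\varphi$ is $C^{0,\alpha}$ and therefore $\int_{|z|<s}|\varphi(x)-\varphi(x+z)||z|^{-n}\,dz\lesssim s^\alpha$ uniformly in $x$;
\item the intermediate region, which is continuous in $x$ by dominated convergence;
\item the far region, controlled by the $L^1_0$ bound and continuity of translations in $L^1$ with weight $(1+|z|)^{-n}$.
\end{enumerate}
This gives continuity of $x\mapsto L_K\varphi_\delta(x)$ at $x_0$. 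The bump contributes $L_K\theta_r(x)\ge -C\eta_r r^{-n}\cdot|B_r|\cdot$(bounded) in the bad direction. Taking $\eta_r$ small relative to $\epsilon$ keeps this error below $\epsilon/2$. Continuity of $f$ handles the right-hand side.

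\textbf{Step 2 (weak formulation for the smooth barrier).} We want $\psi_r$ to be a weak supersolution in $B_{r'}(x_0)$. The local part is only H\"older, so first mollify $\varphi$ inside $B_\delta$, or more simply replace $\varphi$ by $\varphi+|x-x_0|^{\alpha'}$-type corrections. Alternatively, we integrate the pointwise inequality against nonnegative test functions. For $\phi\in C_c^\infty(B_{r'})$ the identity $\cE_K(\psi_r,\phi)=\int(L_K\psi_r+\rho_n\psi_r)\phi$ holds whenever $\psi_r\in\cV$ with $L_K\psi_r$ continuous near $\supp\phi$. This follows by symmetrizing the double integral, and we justify it using Fubini and the Dini bound from Step 1. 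Hence $\cE_K(\psi_r,\phi)\ge\int (f+\epsilon)\phi$.

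\textbf{Step 3 (comparison).} Set $w:=u-\psi_r$. Then $w\le0$ outside $B_{r'}(x_0)$ when $r'$ is small, since $u\le\varphi\le\psi_r$ in $B_\delta$ and $u=\varphi_\delta\le\psi_r$ outside $B_\delta$. We also have $\cE_K(w,\phi)\le-\epsilon\int\phi<0$ for $\phi\ge0$. Testing with $w^+$, which is admissible after a density argument since $w\in\cV$ and has support in $\bar B_{r'}$, we get
$$\cE_K(w,w^+)\ge \cE_K(w^+,w^+)\ge0.$$
Here we use $\rho_n\ge0$, $n\ge 2$, and the fact that the negative long-range term has the right sign because $w^-$ pairs with $w^+$ positively. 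This gives $w^+\equiv0$.

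On the other hand, at $x_0$ we have $w(x_0)=-\eta_r<0$, which is not yet a contradiction. To get one, instead choose $\theta_r$ to subtract: let $\psi_r=\varphi_\delta-\eta_r+\eta_r\chi$ with $\chi$ a bump, $\chi=1$ outside $B_{r'}$ and $\chi(x_0)=0$. Then $w(x_0)=\eta_r>0$ while $w\le0$ outside $B_{r'}$. Step 1 still yields the strict inequality for small $\eta_r$, and USC of $u$ keeps $w^+$ compactly supported. Hence $w^+\equiv0$ contradicts $w(x_0)>0$. The supersolution case follows by applying this to $-u$.
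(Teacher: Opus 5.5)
Once the sign of the bump is fixed, your $\psi_r=\varphi_\delta-\eta_r(1-\chi)$ is exactly the paper's $v_\theta=\varphi_\delta-\theta\eta$, and your argument is the paper's proof: continuity of $\Ld\varphi_\delta$ near $x_0$, a small bump that keeps a strict classical supersolution inequality on a small ball, comparison with $u$, and a contradiction at $x_0$. The only difference is that you prove the comparison by hand, testing with $w^+$ and using $\rho_n\ge 0$ (so $n\ge 2$) together with the vanishing of the long-range part of $\cE(w^+,w^+)$ since $\operatorname{diam}(\supp w^+)<1$, whereas the paper cites the small-volume maximum principle \cite[Corollary~1.9]{CW19}. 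One caveat, shared with the paper, concerns the final step: comparison only gives $w^+=0$ a.e., so contradicting $w(x_0)>0$ needs $w>0$ on a set of positive measure near $x_0$, which requires lower semicontinuity of $u$ at $x_0$ (available in the applications, where $u$ is continuous by Theorem~\ref{T2.2}); upper semicontinuity does not give this, it is not what makes $\supp w^+$ compact (that comes from $u\le\varphi$ in $B_\delta(x_0)$), and for merely USC $u$ the statement itself can fail, e.g.\ $u=\mathbbm{1}_{\{x_0\}}$ is a weak solution of $\Ld u=0$ but violates the viscosity test with $\varphi\equiv 1$.
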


\begin{proof}
Let $x_0 \in \Omega$ and $\varphi \in C^{0,\alpha}(\Omega)$, $\alpha \in (0,1)$, be such that $u(x_0) = \varphi(x_0)$ and $\varphi \ge u$ in some ball $B_\delta(x_0) \subset \Omega$. Due to the monotonicity property of the integral, we may restrict $\delta$ small enough so that
$|B_{\delta}(x_0)| < 2^n e^{\frac{n}{2}(\psi(\frac{n}{2})-\gamma)}|B_1|$. This restriction will let us use the maximum principle in \cite[Corollary~1.9]{CW19}. Define
\begin{align*}
\varphi_\delta =
\begin{cases}
\varphi(x) \text{ if } x \in B_\delta(x_0),
\\[2mm]
u(x) \text{ otherwise.}
\end{cases} 
\end{align*}
We claim that $\Ld \varphi_\delta(x_0) \leq f(x_0)$. If not, then there exists $\epsilon>0$ such that $\Ld \varphi_\delta(x_0) > f(x_0) +2 \epsilon$.
Since $\varphi_\delta$ is H\"older continuous in $B_\delta(x_0)$, it is uniformly Dini continuous, and therefore, by   \cite[Proposition 1.3]{CW19}, $\Ld \varphi_\delta$ is continuous in $B_\delta(x_0)$. By continuity, there exists $\delta_1 < \delta$ satisfying
\begin{equation}\label{P2.4A}
\Ld \varphi_\delta(x) > f(x) +\epsilon\quad \text{ for } x \in B_{\delta_1}(x_0).
\end{equation}
Let $\eta \in C_c^\infty (B_{\frac{\delta_1}{2}}(x_0))$ be such that $\eta \equiv 1$ in $B_{\frac{\delta_1}{4}}(x_0)$. For $\theta>0$, define $v_\theta =\varphi_\delta - \theta \eta$. Again, by \cite[Proposition 1.3]{CW19}, $\Ld \eta$ is continuous in $B_\delta(x_0)$. Choose $\theta\in (0,1)$ small such that 
\begin{equation}\label{P2.4B}
\theta |\Ld \eta(x)| < \frac{\epsilon}{4}\quad  \text{ for } x\in B_{\delta_1}(x_0).
\end{equation}
Hence, for $x\in B_{\delta_1}(x_0)$, we have
\begin{align*}
\Ld v_\theta(x)& = \Ld \varphi_\delta(x) -\theta \Ld \eta(x)  
\\
&> f(x) +\frac{3\epsilon}{4},
\end{align*}
using \eqref{P2.4A} and \eqref{P2.4B}. Since $v_\theta$ is a classical solution, it is easily seen that
$\Ld (v_\theta - u) \geq \frac{3\epsilon}{4}$ in $B_{\delta_1}(x_0)$, in the weak sense. Again, $\varphi_\delta \geq u$ and $\eta \equiv 0$ in $B^c_{\delta_1}(x_0)$, giving us $v_\theta \geq u$ in $B^c_{\delta_1}(x_0)$. By  \cite[Corollary 1.9 (ii)]{CW19} and our choice of $\delta$, 
we then obtain $v_\theta\geq u$ in $\Rn$. Since $\varphi_\delta (x_0) = u(x_0)$ and $\eta(x_0)=1$, we get
$$v_\theta(x_0) \geq u(x_0) \Rightarrow -\theta \geq 0,$$
which contradicts the positivity of $\theta$. Therefore, our claim holds and $u$ is a viscosity subsolution. Hence the proof.
\end{proof}

\section{H\"older regularity and its logarithmic improvement}\label{Holder}
In this section we prove H\"{o}lder regularity of the viscosity solution. Throughout this section we consider the problem
\begin{equation}\label{E3.1}
L_K u + \rho(x) u= f,
\end{equation}
where the kernel $K$ satisfies \eqref{kernel} and \eqref{kernel-der}. The main result of this section is as follows.
\begin{thm}\label{T3.1}
Let $0 < r_1 < r_2<1$ and let $u \in C(\bar{B}_{r_2}) \cap L^\infty(\mathbb{R}^n) \cap L^1_0(\mathbb{R}^n)$ be a viscosity solution to \eqref{E3.1} in $B_{r_2}$. Suppose that $\rho, f \in C^{0, \upkappa}(\bar{B}_{r_2})$ with $\rho \geq 0$ for some 
$\upkappa \in (0, 1)$. 
Define $\upkappa_0 = 0$ and $\upkappa_m = \frac{1}{2-\upkappa_{m-1}}$ for $m\geq 1$. If $u \in C^{0,\upkappa_i} (\bar{B}_{r_2})$, then $u \in C^{0,\min \{\upkappa, \upkappa_{i+1}\} } (\bar{B}_{r_1})$. Moreover, the $C^{0,\min \{\upkappa, \upkappa_{i+1}\}}$-H\"older seminorm of $u$ in $B_{r_1}$ depends only on $n,\kappa$, $\upkappa, \upkappa_i, \upkappa_{i+1}$, $r_1$, $r_2, \norm{u}_\infty$, $\|u\|_{C^{0, \upkappa_i}(B_{r_2})}$,
$[\rho]_{C^{0, \upkappa}(B_{r_2})}$, and $[f]_{C^{0, \upkappa}(B_{r_2})}$.

In particular, if $u \in C(\Omega) \cap L^\infty(\mathbb{R}^n) \cap L^1_0(\mathbb{R}^n)$ is a viscosity solution to \eqref{E3.1} in $\Omega$, and $\rho, f \in C^{0, \kappa}(\bar{\Omega})$ with $\rho \geq 0$, then $u \in C^{0, \upkappa}_{\mathrm{loc}}(\Omega)$. Furthermore, for any subdomain $\Omega_1 \Subset \Omega$,
$$
\sup_{\substack{x, y \in \Omega_1 \\ x \neq y}} \frac{|u(x) - u(y)|}{|x - y|^{\upkappa}} \leq C \left( \|u\|_{L^\infty(\mathbb{R}^n)} + [f]_{C^{0, \kappa}(\Omega)} \right),
$$
where $C > 0$ depends on $n$, $\kappa,\upkappa, \Omega_1$, $\norm{\rho}_{C^{0, \upkappa}(\Omega)}$ and $\mathrm{dist}(\Omega_1, \partial\Omega)$.
\end{thm}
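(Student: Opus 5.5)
We prove Theorem~\ref{T3.1} by the nonlocal Ishii--Lions method, doubling variables with a purely H\"older test function. Fix $r_1<r'<r_2$. Set $\alpha=\min\{\upkappa,\upkappa_{i+1}\}$ and $\varphi(t)=t^{\alpha}$. Take a smooth cutoff $\psi\ge 0$ with $\psi=0$ on $B_{r_1}$ and $\psi\ge 2\|u\|_\infty+1$ outside $B_{r'}$. Consider
\[
\Phi(x,y)=u(x)-u(y)-L\varphi(|x-y|)-\psi(x)
\]
on $\bar B_{r_2}\times\bar B_{r_2}$. We argue by contradiction: suppose $\sup\Phi>0$ for a large $L$ to be chosen. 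The supremum is then attained at some $(\bar x,\bar y)$ with $\bar x\neq\bar y$ and $\bar x\in B_{r'}$. Since $u\in C^{0,\upkappa_i}$, we get $L|\bar x-\bar y|^{\alpha}\le[u]_{\upkappa_i}|\bar x-\bar y|^{\upkappa_i}$, which makes $\delta_0:=|\bar x-\bar y|$ small as $L\to\infty$. In particular $\bar y\in B_{r_2}$ as well.

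We then use the viscosity inequalities from Definition~\ref{Defi-vis}, with H\"older test functions $u(\bar y)+L\varphi(|x-\bar y|)+\psi(x)$ touching $u$ from above at $\bar x$, and the corresponding function touching from below at $\bar y$. These are $C^{0,\alpha}$, which Definition~\ref{Defi-vis} permits. Subtracting the two inequalities and using $\rho\ge0$, $u(\bar x)>u(\bar y)$, and the H\"older bounds on $\rho,f$ gives
\[
L_K\varphi_\delta^{x}(\bar x)-L_K\varphi_\delta^{y}(\bar y)\le C\delta_0^{\upkappa},
\]
where $\delta_0^\upkappa$ comes from $[f]_\upkappa+\|u\|_\infty[\rho]_\upkappa$. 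We split the nonlocal difference into four regions.

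\emph{(i) Cone region.} On the region $\{z: |z|<\eta\delta_0,\ |\langle z,e\rangle|\ge(1-\eta)|z|\}$, with $e=(\bar x-\bar y)/\delta_0$, the concavity of $t\mapsto t^\alpha$ along $e$ yields a good negative contribution of order $-cL\delta_0^{\alpha}$. More precisely, the second difference $\varphi(\delta_0+\langle z,e\rangle)+\varphi(\delta_0-\langle z,e\rangle)-2\varphi(\delta_0)\approx -c\delta_0^{\alpha-2}|z|^2$. Integrated against $|z|^{-n}$ over the cone it gives $-cL\delta_0^{\alpha}$. There is no power gain because the kernel has order zero, but the constant is harmless.

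\emph{(ii) Region $\{\eta\delta_0\le|z|\le 1\}$.} Here we bound the differences using $\Phi(\bar x+z,\bar y+z)\le\Phi(\bar x,\bar y)$. This gives $u(\bar x+z)-u(\bar x)\le u(\bar y+z)-u(\bar y)+\psi(\bar x+z)-\psi(\bar x)$, so translation-invariant terms cancel up to $C\|D\psi\|\,|z|$. For the kernel, $K$ is the same at both points; only the localization $\delta$ and the cone pieces differ.

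\emph{(iii) Tails $|z|>1$.} These are bounded by $C\|u\|_\infty$ and cancel up to constants.

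\emph{(iv) Off-cone small $z$.} This is the main obstacle. Near zero the kernel $|z|^{-n}$ is only borderline integrable, so one must control $\int_{|z|<\eta\delta_0}$ off the cone. There we use $u\in C^{0,\upkappa_i}$ and the touching inequality to get a bound of order $\int_{|z|<\eta\delta_0}\min(L|z|\delta_0^{\alpha-1},[u]_{\upkappa_i}|z|^{\upkappa_i})|z|^{-n}\,dz$. Optimizing the crossover radius yields a term like $L^{1-\upkappa_i}\delta_0^{\cdots}$, and this is exactly where the recursion $\upkappa_{i+1}=1/(2-\upkappa_i)$ should emerge.

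Collecting the estimates gives $cL\delta_0^{\alpha}\le C(1+\delta_0^{\upkappa}+\text{error from (iv)})$. With $\alpha\le\upkappa_{i+1}$ the error is dominated by $\frac{c}{2}L\delta_0^\alpha$ for $L$ large, which is a contradiction. This proves the first claim.

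Finally, we iterate the first claim from $\upkappa_0=0$, using only boundedness and continuity, on a finite chain of shrinking balls. Since $\upkappa_m\uparrow1>\upkappa$, finitely many steps reach $\alpha=\upkappa$. A covering argument over $\Omega_1$ then gives the global estimate. The linear dependence on $\|u\|_\infty+[f]_\upkappa$ follows by applying the result to $u/(\|u\|_\infty+[f]_\upkappa)$, which is allowed by linearity.
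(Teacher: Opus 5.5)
Your argument has a genuine gap, and it comes from the fact that $L_K$ has order zero. Your cone gain is $cL\delta_0^{\alpha}$, but this quantity does not grow with $L$. Since $\Phi(\xb,\yb)>0$, you have $L\delta_0^{\alpha}<u(\xb)-u(\yb)\le\min\{2\|u\|_\infty,[u]_{\upkappa_i}\delta_0^{\upkappa_i}\}$. Every error term must therefore be $O(\delta_0^{\alpha})$ with a constant independent of $L$. Your closing inequality $cL\delta_0^{\alpha}\le C(1+\delta_0^{\upkappa}+\cdots)$ gives no contradiction, because the term $C\cdot 1$ is never absorbed.

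Steps (ii) and (iii) produce exactly such $O(1)$ errors. In (ii), the cost of the translation inequality is $\int_{\eta\delta_0\le|z|\le 1}(\psi(\xb+z)-\psi(\xb))K(z)\dz$. The best available bound is $C\|D\psi\|_\infty\int_{\eta\delta_0\le|z|\le1}|z|^{1-n}\dz\approx C$, which is of order one; symmetrizing only replaces $\|D\psi\|_\infty$ by $\|D^2\psi\|_\infty$. In (iii), you say the tails cancel only ``up to constants''. Moreover, a single tail $\int_{|z|>1}u(\xb+z)K(z)\dz$ is not even controlled by $\|u\|_\infty$ alone, since $|z|^{-n}$ is not integrable at infinity.

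The missing idea is to use $\Phi(\xb+z,\yb+z)\le\Phi(\xb,\yb)$ only for $|z|<\delta_0^{\alpha}$. There it costs $C\|D\psi\|_\infty\int_{|z|<\delta_0^{\alpha}}|z|^{1-n}\dz=O(\delta_0^{\alpha})$. For $|z|\ge\delta_0^{\alpha}$, tail included, the paper proceeds as follows:
\begin{enumerate}
\item Recenter the two integrals separately, as $\int[u(\xb)-u(w)]K(\xb-w)\,{\rm d}w-\int[u(\yb)-u(w)]K(\yb-w)\,{\rm d}w$.
\item Replace $u(\yb)$ by $u(\xb)$, using $u(\xb)>u(\yb)$.
\item Bound the kernel difference via \eqref{kernel-der}, $|K(\xb-w)-K(\yb-w)|\le C\delta_0|\yb-w|^{-n-1}$, together with $|u(\yb)-u(w)|\le[u]_{\upkappa_i}|\yb-w|^{\upkappa_i}$. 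The symmetric difference of the two shifted annuli is handled by the same H\"older bound.
\end{enumerate}
For $\delta_0^{\alpha}\le|z|\le\tilde r$, with $\tilde r$ a fixed fraction of $r_2-r_1$, this costs $\delta_0\int_{\delta_0^{\alpha}}^{\tilde r}t^{\upkappa_i-2}\dt\approx\delta_0^{1-\alpha(1-\upkappa_i)}$. Beyond $\tilde r$ it costs only $O(\delta_0)$. The requirement $\delta_0^{1-\alpha(1-\upkappa_i)}\lesssim\delta_0^{\alpha}$ is exactly $\alpha(2-\upkappa_i)\le 1$, i.e.\ $\alpha\le\upkappa_{i+1}$. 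So the recursion comes from balancing the localization cost against the kernel-comparison cost at the radius $\delta_0^{\alpha}$. Hypothesis \eqref{kernel-der}, which your proposal never uses, is indispensable here.

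Your step (iv) is not where the recursion arises. Since $L\delta_0^{\alpha}<[u]_{\upkappa_i}\delta_0^{\upkappa_i}$, the crossover radius in your minimum exceeds $\delta_0$. On $|z|<\eta\delta_0$ you therefore only get $\eta L\delta_0^{\alpha}$. For small $\eta$ this even exceeds your cone gain, which is of order $\eta^{(n+3)/2}L\delta_0^{\alpha}$ when the cone and the off-cone ball share the radius $\eta\delta_0$.

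The paper avoids this by decoupling the scales:
\begin{enumerate}
\item The cone has fixed radius and aperture.
\item The test function is used only on $B_{\delta_1\delta_0}$, with $\delta_1$ small and a second-order bound on that symmetric set, giving error $\delta_1^2L\delta_0^{\alpha}$.
\item The remaining off-cone part of $B_{\delta_0^{\alpha}}$ is handled by the translation inequality at no $L$-dependent cost.
\end{enumerate}
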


\begin{proof}
Dividing both sides of \eqref{E3.1} by $(\norm{u}_\infty+\|u\|_{C^{0, \upkappa_i}(B_{r_2})} + [f]_{C^{0,\upkappa}(B_{r_2})})$ we assume that
$$\norm{u}_\infty\leq 1,\quad \|u\|_{C^{0, \upkappa_i}(B_{r_2})}\leq 1\quad \text{and}\quad [f]_{C^{0,\upkappa}(B_{r_2})}\leq 1.$$
Denote by $\upgamma=\min\{\upkappa, \upkappa_{i+1}\}$. We assume $\upgamma>\upkappa_i$, otherwise, there is nothing to prove.
To apply the Ishii-Lions method, we consider the doubling variable function
$$
\Phi (x,y) = u(x)-u(y) -L |x-y|^{\upgamma} - 2 \psi(x),
$$
where $\psi:\Rn\to[0, 1]$ is a smooth 
function satisfying  $\psi =0$ in  $B_{r_1}$ and $\psi =1$ in  $B^c_{\frac{r_1+r_2}{2}}$. To complete the proof we need to find an $L$, independent of $u$,
so that $\Phi\leq 0$ in $B_{r_2}\times B_{r_2}$. Once this is established the result follows from the fact that
$$\Phi(x, y)\leq 0\quad \text{in}\; \bar{B}_{r_1}\times \bar{B}_{r_1}\Rightarrow |u(x)-u(y)|\leq L|x-y|^{\upgamma}\quad \text{for}\; x, y\in \bar{B}_{r_1}.$$
To prove the above claim we proceed via the method of contradiction and assume that for all large $L$ we have
\begin{equation}\label{E3.1A}
\max_{\bar{B}_{r_2}\times \bar{B}_{r_2}}\Phi>0.
\end{equation} 
Let $(\xb,\yb)\in \bar{B}_{r_2}\times \bar{B}_{r_2}$ be a point of maximizer, that is, 
$$\max_{\bar{B}_{r_2}\times \bar{B}_{r_2}}\Phi=\Phi(\xb,\yb)>0.$$
Note that, by our choice of $\psi$,  we have  $\Phi(x,y) \leq 0$ for $|x| \geq \frac{r_1+r_2}{2}$. Thus $|\xb|< \frac{r_1+r_2}{2}$.
Again, since
\begin{equation}\label{E3.1B}
L  |\xb -\yb|^{\upgamma} \leq u(\xb)-u(\yb)\leq 2,
\end{equation}
setting $L > 2 \left[ \frac{r_2 - r_1}{8} \right]^{-\upgamma}$ and $\abar:=\xb-\yb$,
we see that $|\bar a| = |\xb-\yb| < \frac{r_2 - r_1}{8}$. In particular,  $|\yb|< \frac{3r_1+ 5r_2}{8}$.

Denote by 
$$\phi(x,y) =L |x-y|^{\upgamma} + 2\psi(x) .$$
Then $x \mapsto u(\xb) -\phi(\xb, \yb) +\phi(x, \yb)$ touches $u$ from above at $\xb$, and $y \mapsto u(\yb) +\phi(\xb, \yb) -\phi(\xb, y)$ touches $u$ from below at $\yb$ in any ball of radius smaller that $\frac{r_2-r_1}{4}$.  For $\delta \in (0, \frac{r_2-r_1}{4})$, we let
$$w_1(z) = 
\begin{cases}
u(\xb) -\phi(\xb, \yb) +\phi(z, \yb) &\text{ for } z \in B_{\delta}(\xb),
\\[2mm]
u(z) &\text{otherwise},
\end{cases}$$
and 
$$w_2(z) = 
\begin{cases}
u(\yb) +\phi(\xb, \yb) -\phi(\xb, z) &\text{ for } z \in B_{\delta}(\yb),
\\[2mm]
u(z) &\text{otherwise}.
\end{cases}$$
Thus, by the definition of viscosity solutions (Definition~\ref{Defi-vis}) we get
$L_K w_1 (\xb) +\rho(\xb) u(\xb) \leq f(\xb)$ and $L_K w_2 (\yb) +\rho(\yb)u(\yb)\geq f(\yb)$, implying
\begin{align}\label{E3.1C}
L_K w_1(\xb) -L_K w_2 (\yb) &\leq f(\xb) - f(\yb) - \rho(\xb)(u(\xb)-u(\yb))+ u(\yb)(\rho(\yb)-\rho(\xb)) \nonumber
\\
&\leq |\abar|^\upkappa
+ [\rho]_{C^{0, \upkappa}(B_{r_2})}|\abar|^\upkappa,
\end{align}
where we used the fact $u(\xb)-u(\yb)>0$ and $\rho\geq 0$. We introduce the notation
$$L_K [D] w(x) = \int_{D} (w(x) \mathbbm{1}_{B_1} (z) - w(x+z))K(z) \dz,$$
and define the following domains (see Figure \ref{fig})
$$
\cone=\{z\in B_{\delta_0|\abar|}\; :\; |\langle \abar, z\rangle|\geq (1-\eta_0) |\abar||z|\}, \quad \cD_1=B_{\delta_1 |\abar|} \cap \cone^c,
$$ 
$$
\quad \cD_2=B_{|\abar|^{\upgamma}}\setminus (\cD_1\cup\cone)
\quad \cD_3 = B_{\tilde r} \setminus B_{|\abar|^{\upgamma}}, \text{ and}\quad \cD_4 = B^c_{\tilde r}.
$$
where $\tilde{r} = \frac{r_2 - r_1}{4}$ and $\delta_0, \eta_0, \delta_1$ are small, to be chosen later. We also set $\delta=\delta_1|\abar|$.
Note that, since $r_2<1$, we have $\tilde{r} <1$.
\begin{figure}[h]
    \centering
    \begin{tikzpicture}[scale=0.58]

    \definecolor{colorE}{rgb}{1,1,1}      
    \definecolor{colorD}{HTML}{F2F2F2}     
    \definecolor{colorA}{HTML}{E6F0FA}      
    \definecolor{colorC}{HTML}{FFEBE6}       
    \definecolor{colorB}{HTML}{E6F5EA}        

\fill[colorD] (0,0) circle (4.5);

\fill[colorA] (0,0) circle (3.2);
    
\fill[colorC] (0,0) -- (50:2.5) arc (50:80:2.5) -- cycle;
\fill[colorC] (0,0) -- (230:2.5) arc (230:260:2.5) -- cycle;
    
\fill[colorB] (0,0) -- (80:0.7) arc (80:230:0.7) -- cycle;
\fill[colorB] (0,0) -- (260:0.7) arc (260:410:0.7) -- cycle;

\draw[thick] (0,0) circle (4.5);
\draw[thick, dashed] (0,0) circle (3.2);
\draw[thick] (50:2.5) -- (230:2.5);
\draw[thick] (80:2.5) -- (260:2.5);
\draw[thick] (50:2.5) arc (50:80:2.5);
\draw[thick] (230:2.5) arc (230:260:2.5);
\draw[thick] (80:0.7) arc (80:230:0.7);
\draw[thick] (260:0.7) arc (260:410:0.7);

\draw[->, thick, dashed] (0,0) -- (65:4) node[below right=-2pt] {$\bar{a}$};

\node at (335:0.36) {{\scriptsize $\mathcal{D}_1$}};
\node at (245:1.5) {$\mathcal{C}$};
\node at (315:1.8) {$\mathcal{D}_2$};
\node at (345:3.8) {$\mathcal{D}_3$};
\node at (0:5.2)   {$\mathcal{D}_4$};

\end{tikzpicture} 
    \caption{Relevant domains for nonlocal viscosity evaluation}
    \label{fig}
\end{figure}
Also, with the help of \eqref{E3.1B} we set $L$ large enough, depending on $\tilde{r}$, so that $|\abar|^\upgamma< \tilde{r}$. In the calculations below,
we use the notations $C_1, C_2, ...$ to denote arbitrary constants whose value might change from line to line.
From \eqref{E3.1C} we then have
\begin{align}\label{E3.1D}
C |\abar|^\upkappa &\geq
\underbrace{L_K [\cone] w_1(\xb) - L_K [\cone] w_2(\yb)  }_{I_1}  
+ \underbrace{L_K [\cD_1] w_1(\xb) - L_K [\cD_1] w_2(\yb)}_{I_2} \nonumber
\\
&\quad
+ \underbrace{L_K [\cD_2] w_1(\xb) - L_K [\cD_2]w_2(\yb)}_{I_3}
+\underbrace{L_K [\cD_3] w_1(\xb) - L_K [\cD_3]w_2(\yb)}_{I_4} \nonumber
\\
&\qquad
+\underbrace{L_K [\cD_4] w_1(\xb) - L_K [\cD_4]w_2(\yb)}_{I_5}.
\end{align}
We denote $\triangle^2 f(x,z) =f(x) - f(x+z) + \nabla f(x)\cdot z$. We need the following estimates from \cite{BT25}. There exist 
$\delta_0=\eta_0\in (0, \frac{1}{2})$ and $L_0>0$, so that
\begin{align}
\frac{1}{C_1} L |\abar|^{\upgamma-2}|z|^2 &\leq \triangle^2\phi (\cdot, \yb)(\xb, z), \triangle^2\phi(\xb, \cdot)(\yb, z)  \leq C_1 L 
|\abar|^{\upgamma-2}|z|^2\quad \text{for all} \; z\in \cone, \label{E3.1E}
\\
-C_1 L |\abar|^{\upgamma-2}|z|^2 &\leq \triangle^2\phi (\cdot, \yb)(\xb, z), \triangle^2\phi(\xb, \cdot)(\yb, z)\leq C_1 L 
|\abar|^{\upgamma-2}|z|^2
\quad \text{for all}\; |z|\leq \delta_1 |\abar|,\label{E3.1F}
\end{align}
for all $\delta_1\in (0, \frac{1}{2})$ and $L\geq L_0$.
A proof of \eqref{E3.1E} can be found in \cite[Lemma~2.2]{BT25} and see the proof of \cite[Lemma~3.2, eq (3.2)]{BT25} for the estimate \eqref{E3.1F}.
Now we proceed to estimate $I_i, i=1,2,..,5,$ suitably so that \eqref{E3.1D} leads to a contradiction for large values of $L$.

\medskip
\noindent\underline{Lower bound of $I_1$:} We denote by $\ell(z)=-\grad_x\phi(\xb,\yb)\cdot z$. By the anti-symmetry
and linearity of $\ell$, it follows that $L_K [D]\ell(x)=0$ for all $x$, provided $D$ is symmetric about $0$. Using $u(\xb)-u(\xb+z)\geq \phi(\xb,\yb)
-\phi(\xb+z, \yb)$, we compute for $L\geq L_0$ that
\begin{align*}
L_K [\cone] w_1 (\bar x) &= L_K [\cone] w_1 (\bar x) - L_K [\cone] \ell (\bar x)
\\
&\geq L_K [\cone] \phi(\cdot , \bar y)(\bar x) - L_K [\cone] \ell (\bar x)
\\
&= \int_{\cone} \triangle^2\phi (\cdot, \yb)(\xb, z) K(z)\dz
\\
&\geq \frac{1}{C_1} L |\abar|^{\upgamma-2} \int_{\cone} |z|^{2-n} \dz
\\
&\geq C_2 L |\abar|^{\upgamma}
\end{align*}
where  in the fourth line we use \eqref{E3.1E} and in the last line we compute the integral using \cite[Example 1]{BCCI12} .
An upper bound for $L_K[\cone] w_2(\bar y)$ is obtained similarly, giving us
\begin{equation}\label{estim I1}
I_1 \geq C_1 L |\abar|^{\upgamma}
\end{equation}
for all $L\geq L_0$, where $L_0$ depends on $\upgamma, n$.

\medskip
\noindent\underline{Lower bound of $I_2$:} By a similar calculations as above, and using \eqref{E3.1F} it is easily seen that 
\begin{equation}\label{estim I2}
I_2 \geq - C_2 L|\abar|^{\upgamma} \delta^2_1
\end{equation}
for all $L\geq L_0$.
See also \cite[Lemma~3.2]{BT25} for a similar calculation.

\medskip
\noindent\underline{Lower bound of $I_3$:}
Note that, for $|z|\leq \tilde{r}$, we have $\xb+z, \yb+z\in B_{r_2}$, and therefore
\[ \Phi ( \xb +z, \yb+z) \leq \Phi (\xb, \yb) \]
\[\Rightarrow (u(\xb +z) - u(\xb)) - (u(\yb +z) - u(\yb)) \leq 2(\psi (\xb +z) - \psi(\xb)).\]
Thus
\begin{align}\label{estim I3}
I_3 &\geq  2 \int_{\cD_2} ( \psi (\xb)-\psi(\xb +z)) K(z)\dz \nonumber
\\
&\geq -2\norm{\grad\psi}_\infty \kappa \int_{\cD_2} |z|^{1-n} \dz \nonumber
\\
&\geq -2\norm{\grad\psi}_\infty \kappa \int_{0\leq r \leq |\abar|^{\upgamma}} \dr \nonumber
\\
&\geq -C_3 |\abar|^{\upgamma },
\end{align}
where we used \eqref{kernel} in the second line.

\medskip

\noindent\underline{Lower bound of $I_4$:}
\begin{align*}
I_4 &= \int \limits_{B_{\tilde r} \setminus B_{|\abar|^{\upgamma}}} \left[ u(\xb) - u(\xb +z)  - u(\yb) + u(\yb +z) \right] K(z)\dz
\\
&= \int \limits_{B_{\tilde r}(\xb) \setminus B_{|\abar|^{\upgamma}}(\xb)}  [u(\xb) - u(z)]   K(\xb-z)\dz - \int \limits_{B_{\tilde r}(\yb) \setminus B_{|\abar|^{\upgamma}}(\yb)}  [u(\yb) - u(z)]   K(\yb -z)\dz
\\
&\geq \underbrace{\int \limits_{B_{\tilde r(\xb)} \setminus B_{|\abar|^{\upgamma}}(\xb)}  [u(\xb) - u(z)]   K(\xb-z)\dz - \int \limits_{B_{\tilde r(\yb)} \setminus B_{|\abar|^{\upgamma}}(\yb)}  [u(\xb) - u(z)]  K(\xb-z)\dz }_{I_{4,1}}
\\
&\quad + \underbrace { \int \limits_{B_{\tilde r}(\yb) \setminus B_{|\abar|^{\upgamma}}(\yb)}  [u(\yb) - u(z)]   \left[ K(\xb-z) - K(\yb-z) \right] \dz}_{I_{4,2}},
\end{align*}
where in the last line we use the fact that $u(\xb)>u(\yb)$.
Since $|\abar|\to 0$ as $L$ enlarges, by \eqref{E3.1B}, and $\upgamma<1$, we have $|\abar|<\frac{1}{4}|\abar|^\upgamma$ for all
large $L$. Thus, for any $t\in [0,1]$ and $z\in B_{\tilde r}(\yb) \setminus B_{|\abar|^{\upgamma}}(\yb)$, we have
$$
|\yb-z + t\abar|\geq |\yb-z|-\frac{1}{4}|\abar|^\upgamma>\frac{1}{2}|\yb-z| + \frac{1}{4}|\abar|^\upgamma>\frac{1}{2}|\yb-z|.
$$
Applying the mean value theorem together with the $\upkappa_i$-H\"older continuity of $u$ and \eqref{kernel-der}, we get
$$
I_{4,2} \geq -C_1|\abar| \int\limits_{B_{\tilde r(\yb)} \setminus B_{|\abar|^{\upgamma}}(\yb)} \frac{|\yb -z|^{\upkappa_i}}{|\yb-z|^{n+1}}\dz \geq -C_2|\abar|^{1-\upgamma(1-\upkappa_i)}.
$$
We note that for $z\in B_{\tilde r(\xb)} \setminus B_{|\abar|^{\upgamma}}(\xb)$, we have
$$\frac{1}{2}|\xb-z|\leq |\xb-z|-|\abar|\leq |\yb-z|\leq |\xb-z|+ |\abar|\leq 2 |\xb-z|.$$
Using 
$$\underbrace{ \left(B_{\tilde r}(\xb) \setminus B_{|\abar|^{\upgamma}}(\xb)\right) \triangle \left(B_{\tilde r}(\yb) \setminus B_{|\abar|^{\upgamma}}(\yb)\right)}_{:=\mathcal{B}}  \subset \underbrace{\left( B_{|\abar|^{\upgamma} + |\abar|}(\xb) \setminus (B_{|\abar|^{\upgamma} - |\abar|}(\xb) \right)}_{:=\mathcal{B}_1} \cup \underbrace{\left( B_{\tilde r + |\abar|}(\xb) \setminus (B_{\tilde r - |a|}(\xb) \right)}_{:=\mathcal{B}_2}, $$
and $\upkappa_i$-H\"older property of $u$, we estimate
\begin{align*}
I_{4,1} &\geq\int_{{\mathcal{B}}} ( u(\xb) - u(z))K(\xb-z) \dz
\\
&\geq -C_1 \left[ \int_{\mathcal{B}_1} \frac{|\xb - z|^{\upkappa_i}}{|\xb - z|^n} \dz + \int_{\mathcal{B}_2} \frac{\dz}{|\xb - z|^n}\right]
\\
&\geq -C_1 \left[  |\abar|^{\upgamma \upkappa_{i}}\left| \log \left( \frac{|\abar|^{\upgamma} + |a|}{|\abar|^{\upgamma} - |\abar|}  \right) \right|   +   \left| \log \left( \frac{\tilde r + |\abar|}{\tilde r - |\abar|}  \right) \right|\right]
\\
&\geq -C_1 \left[  |\abar|^{\upgamma \upkappa_{i}} \left| \log \left( 1 + \frac{2|\abar|^{1- \upgamma} }{1-|\abar|^{1- \upgamma}}  \right) \right|   +   \left| \log \left(1+  \frac{2|\abar|}{\tilde r - |\abar|}  \right) \right|\right]
\\
&\geq -C_1 [|\abar|^{\upgamma \upkappa_{i}} |\abar|^{1 - \upgamma} +|\abar|]
\\
&\geq -C_1|\abar|^{1- \upgamma(1-\upkappa_i )},
\end{align*}
where in the second line we used \eqref{kernel}, in the third line we use $|\xb-z|\leq 2|\abar|^\upgamma$ in $\mathcal{B}_1$, and in the fifth
line we use the inequality $\log(1+x)\leq x$ for $x\geq 0$.
Combining the above two estimates, we arrive at
\begin{equation}\label{estim I4}
I_4 \geq -C_4 |\abar|^{1- \upgamma(1-\upkappa_i )}
\end{equation}
for all $L\geq L_0$.

\medskip

\noindent\underline{Lower bound for $I_5$:} Notice that $B^c_{\tilde r}(\xb) \setminus B^c_{\tilde r}(\yb) \subset B_{\tilde r + |\abar|}(\xb) \setminus 
B_{\tilde r - |\abar|}(\xb) $. Hence, using $u(\xb)>u(\yb)$, we have
\begin{align}\label{estim I5}
I_5 &= \int_{B^c_{\tilde r}} (u(\xb) \mathbbm{1}_{B_1} (z) - u(\xb+z)) K(z)\dz -
\int_{B^c_{\tilde r}} (u(\yb) \mathbbm{1}_{B_1} (z) - u(\yb+z)) K(z) \dz \nonumber
\\
&\geq \int_{B^c_{\tilde r}}  - u(\xb+z) K(z) \dz + \int_{B^c_{\tilde r}}  u(\yb+z) K(z) \dz \nonumber
\\
&= -\int_{B^c_{\tilde r}(\xb)} u(z) K(\xb-z) \dz +\int_{B^c_{\tilde r}(\yb) } u(z) K(\yb -z)\dz \nonumber
\\
&\geq -\int_{\left( B_{\tilde r + |\abar|}(\xb) \setminus (B_{\tilde r - |\abar|}(\xb) \right)} \frac{\dz}{|\xb - z|^n} + 
\int_{B^c_{\tilde r}(\yb) } |K(\xb-z)-K(\yb-z))|\dz \nonumber
\\
&\geq -C_5|\abar|,
\end{align}
where the last inequality follows from a similar calculations done for $I_{4,1}$ and $I_{4,2}$.

Now gathering the estimates from \eqref{estim I1}, \eqref{estim I2}, \eqref{estim I3}, \eqref{estim I4} and \eqref{estim I5} in \eqref{E3.1D} we obtain
\begin{align*}
|\abar|^\upkappa \geq C [L |\abar|^{\upgamma} (1-\delta^2)  -|\abar|^{\upgamma} - |\abar|^{1- \upgamma(1-\upkappa_i )} - |\abar|]
\end{align*}
for all $L\geq L_0$, where $L_0$ can be set as the maximum of all $L_0$ appearing in \eqref{estim I1}-\eqref{estim I5}. By our choice of $\upkappa_i$ we have $1- \upkappa_{i+1}(2-\upkappa_i ) = 0\Rightarrow \upgamma\leq 1- \upgamma(1-\upkappa_i )$, and $\upkappa\geq\upgamma$, there exists a $L_0$ so that the above inequality fail to hold for any $L\geq L_0$. Thus any such $L$ will lead to a contradiction to \eqref{E3.1A}. This completes the proof of the first part.

Since $[0, 1)\ni t\mapsto \frac{1}{2-t}-t$ is a strictly decreasing map, attaining value $0$ at $t=1$, we see that $\{\upkappa_n\}$ forms a strictly increasing sequence with its limit being $1$. Therefore, given any $\upkappa\in (0, 1)$, there exists a finite $m$ satisfying $\upkappa_m>\upkappa$.
Now we can follow a standard iteration and covering argument to see that $u\in C^{0, \upkappa}_{\rm loc}(\Omega)$. Furthermore, as suggested by the proof above, local $\upkappa_1\wedge\upkappa$-H\"older seminorm depends of $\|u\|_\infty$ and $[f]_{C^{0, \upkappa}}$, and therefore, all the successive H\"{o}lder seminorms would depend on $\|u\|_\infty$ and $[f]_{C^{0, \upkappa}}$. This completes the proof of the second part.
\end{proof}

\begin{proof}[Proof of Theorem~\ref{T1.1}]
As we mentioned before, for $n\geq 2$ we have $\rho_n>0$. 
Also, by Theorem ~\ref{T2.2} we have $u\in C(\Omega)$.
Hence the proof follows by combining Proposition~\ref{P2.4} and Theorem~\ref{T3.1}.
\end{proof}

Next we proceed to prove a logarithmic improvement on the regularity result established in Theorem~\ref{T3.1}, which will give us the proof of
Theorem~\ref{T1.2}. We first need the following lemma for it.
\begin{lem}\label{L3.2}
Consider $r\in (0,1)$.
Let $u\in L^\infty(\Rn) \cap L_0^1(\Rn) \cap C^{0,\upkappa}(\bar{B}_r)$ be a viscosity solution to $L_K u +\rho(x) u = f$ in $B_r$,
 where $f, \rho$ are in $C^{0, \upkappa}(\bar{B}_r)$. For $|e|=1$, define 
 $$w(x) = \frac{u(x+eh) -u(x)}{|h|^{\upkappa}}\quad \text{for}\; 0<|h|<\frac{r}{8}.$$
  Then there exist constants 
  $C=C(n,r,\kappa,\rho)$ and $\beta=\beta(n,\kappa) \in (0,1)$ such that  for all $x,y \in B_{\frac{r}{32}}$
$$|w(x) - w(y)| \leq C\left(\norm{u}_\infty + \norm{u}_{C^{0, \upkappa}(B_r)} + [f]_{C^{0, \upkappa}(B_r)}\right) |\log|x-y||^{-\beta}.$$
\end{lem}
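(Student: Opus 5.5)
By linearity, $w$ solves an equation of the same type as $u$, with a bounded right-hand side. So the plan is to write down that equation, check that its right-hand side and the tail norms of $w$ are controlled by the quantities in the statement, and then apply the log-continuity estimate of Theorem~\ref{T2.2} (or directly Theorem~\ref{T-KM} after mollification).

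\textbf{Step 1: the equation for $w$.} Since $u$ is a viscosity solution in $B_r$ and the translated function $u(\cdot+eh)$ is a viscosity solution in $B_{r-|h|}$, subtracting and dividing by $|h|^{\upkappa}$ gives, on $B_{r/2}$,
$$L_K w + \rho(x) w = g, \qquad g(x):=\frac{f(x+eh)-f(x)}{|h|^{\upkappa}} - u(x+eh)\,\frac{\rho(x+eh)-\rho(x)}{|h|^{\upkappa}}.$$
We have $\|g\|_{L^\infty(B_{r/2})}\le [f]_{C^{0,\upkappa}}+\|u\|_\infty[\rho]_{C^{0,\upkappa}}$, and $g$ is continuous.

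To make the subtraction rigorous I would not work with viscosity solutions directly. Instead I would use the mollification device from the proof of Theorem~\ref{T2.2}. One could first use Theorem~\ref{T3.1} to improve the regularity of $u$ and then justify the difference equation classically. The simpler route is to mollify: $u^\varepsilon=u\ast\eta_\varepsilon$ is $C^2_b$. Arguing as in Proposition~\ref{P2.4}, the viscosity solution in the statement should coincide with a weak solution, or more directly be a distributional solution. Then $u^\varepsilon$ solves the equation classically with right-hand side $f^\varepsilon$, up to a commutator $(\rho u)^\varepsilon - \rho u^\varepsilon$ which tends to zero. The difference quotient $w^\varepsilon$ then satisfies a classical equation with bounded right-hand side, uniformly in $\varepsilon$.

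\textbf{Step 2: bounds on $w$.} On $B_{r/2}$, $|w|\le \|u\|_{C^{0,\upkappa}(B_r)}$ by the Hölder assumption. Outside $B_{r/2}$ we only have $|w|\le 2\|u\|_\infty|h|^{-\upkappa}$, which degenerates as $h\to0$. This is the main obstacle: Theorem~\ref{T2.2} requires $\|w\|_\infty$ and the $L^1_0$ norm of $w$.

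\textbf{Step 3: localizing the tail.} To handle this, I would truncate. Let $\chi$ be a cutoff with $\chi=1$ on $B_{r/4}$ and support in $B_{r/2}$, and set $\tilde w=\chi w$. On $B_{r/8}$,
$$L_K\tilde w=L_K w+\int (1-\chi(x+z))\,w(x+z)K(z)\dz ,$$
and the extra term only involves $|z|\ge r/8$. There I would move the difference quotient onto the kernel by a change of variables:
$$\int_{|z|\ge r/8}\frac{u(x+z+eh)-u(x+z)}{|h|^{\upkappa}}(1-\chi(x+z))K(z)\dz=\frac{1}{|h|^{\upkappa}}\int u(y)\big[\tilde K(y-eh)-\tilde K(y)\big]\dy,$$
where $\tilde K(z):=(1-\chi(x+z))K(z)$ is $C^1$ away from the origin with $|\grad \tilde K(z)|\lesssim |z|^{-n-1}$ by \eqref{kernel-der}. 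This bounds the extra term by $C|h|^{1-\upkappa}\|u\|_\infty\le C\|u\|_\infty$. Together with the truncation of $\mathbbm 1_{B_1}$, this shows that $\tilde w$ solves $L_K\tilde w+\rho\tilde w=\tilde g$ in $B_{r/8}$, with $\|\tilde g\|_\infty\le C(\|u\|_\infty+\|u\|_{C^{0,\upkappa}}+[f]_{C^{0,\upkappa}})$, and moreover $\|\tilde w\|_\infty\le\|u\|_{C^{0,\upkappa}}$ with $\tilde w$ compactly supported.

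\textbf{Step 4: conclusion.} Apply Theorem~\ref{T2.2} to $\tilde w$ (at the mollified level) on $B_{r/16}$ and let $\varepsilon\to0$. This gives the logarithmic modulus on $B_{r/64}$, and a covering argument (or adjusting radii) yields it on $B_{r/32}$ with $\beta=\beta(n,\kappa)$ independent of $h$. Steps 1 and 3 carry the real work: justifying the linear difference equation at the viscosity or mollified level, and controlling the tail uniformly in $h$ via the kernel-gradient bound.
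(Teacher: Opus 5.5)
Your proposal follows the paper's proof: cut off $w$ by $\chi$, get the equation for $\chi w$ by subtracting the equations at $x+eh$ and $x$, control the tail term uniformly in $h$ by moving the difference quotient onto the kernel via \eqref{kernel-der}, and conclude with Theorem~\ref{T2.2}; your smooth truncation $\tilde K=(1-\chi(x+\cdot))K$ is a slightly cleaner version of the paper's sharp split at $|z|=\frac r2$. The one soft spot is Step 1, where you need neither weak solutions nor Proposition~\ref{P2.4} (which goes in the opposite direction): since $u\in C^{0,\upkappa}(\bar B_r)$, $u$ is itself an admissible test function in Definition~\ref{Defi-vis}, so it solves the equation pointwise and the difference equation holds classically, which is exactly how the paper justifies it.
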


\begin{proof} 
Since $u\in C^{0,\upkappa}(\bar{B}_r) \cap L^{\infty} (\Rn)\cap L_0^1(\Rn) $, we see that $u\in \cV(B_r)$. Furthermore, 
$u$ becomes a classical solution and therefore, a weak solution to 
\begin{equation}\label{E3.2A}
L_K u +\rho(x) u = f \quad \text{in}\; B_r.
\end{equation}
Also, $w \in L^{\infty} (\bar{B}_{\frac{3r}{4}})$. Since $\rho u\in C^{0,\upkappa}(\bar{B}_r)$, moving it to the right hand side we may assume that $\rho=0$.

 Let $\chi \in C_c^{\infty}(B_{\frac{r}{4}})$ be a cut-off function
satisfying $\chi \equiv 1$ in $B_{\frac{r}{8}}$ and $0\leq \chi\leq 1$. Define $v(x):= \chi(x) w(x)\in L^\infty(\Rn)$. For $x\in B_{\frac{r}{4}}$, we write
\begin{align}\label{E3.2B}
L_K v(x) &= \int_{B_1} (v(x) - v(x+z)) K(z) \dz - \int_{B_1^c} v(x+z) K(z)\dz  \nonumber
\\
&=\chi(x)L_K w(x) + \int_{B_1^c} w(x+z)[\chi(x) - \chi(x+z)] K(z)\dz\nonumber
\\
&\qquad + \int_{B_1} w(x+z)[\chi(x) - \chi(x+z)] K(z)\dz\nonumber
\\
&=\chi(x)L_K w(x) + \int_{B_{\frac{r}{2}}^c} w(x+z)[\chi(x) - \chi(x+z)] K(z)\dz \nonumber
\\
&\qquad\quad + \int_{B_{\frac{r}{2}}} w(x+z)[\chi(x) - \chi(x+z)] K(z)\dz\nonumber
\\
&:= F(x).
\end{align}
We claim that 
\begin{equation}\label{E3.2C}
\sup_{B_{\frac{r}{4}}} |F|<\infty,
\end{equation}
which will allow us to use  Theorem~\ref{T2.2} to conclude our result. Applying \eqref{E3.2A} (recall that $\rho=0$), we see that
\begin{align*}
|L_K w(x)| = \frac{1}{|h|^\upkappa}|L_K u(x+eh) - L_K u(x)|
&= \frac{1}{|h|^\upkappa}|f(x+eh) - f (x)|\leq [f]_{C^{0, \upkappa}(B_r)}.
\end{align*}
For $x\in B_{\frac{r}{4}}$, $|x+z|\geq \frac{r}{2}-\frac{r}{4}\geq \frac{r}{4}$ for $|z|\geq \frac{r}{2}$, the middle term in the expression of $F(x)$
can be computed as
\begin{align*}
\chi(x)\left| \int_{B_{\frac{r}{2}}^c} w(x+z) K(z)\dz \right|
&= \frac{\chi(x)}{|h|^{\upkappa}}\left| \int_{B_{\frac{r}{2}}^c} u(x+eh+z) K(z)\dz - \int_{B_{\frac{r}{2}}^c} u(x+z) K(z)\dz\right|
\\
&\leq \frac{1}{|h|^{\upkappa}}\left|\int_{B_{\frac{r}{2}}^c (x+eh)} u(z) K(x+eh-z)\dz - \int_{B_{\frac{r}{2}}^c(x)} u(z) K(x-z)\dz\right|
\\
&= \frac{1}{|h|^{\upkappa}} \Bigg|\int_{B_{\frac{r}{2}}^c (x+eh)} u(z) \left[ K(x+eh-z) - K(x-z) \right]\dz 
\\
&\qquad \qquad+ \int_{B_{\frac{r}{2}}^c (x+eh)} u(z) K(x-z) \dz- \int_{B_{\frac{r}{2}}^c(x)} u(z) K(x-z)\dz\Bigg|
\\
&\leq \frac{1}{|h|^{\upkappa}} \Bigg[C |h|\int_{B_{\frac{r}{2}}^c (x+eh)}   \frac{1}{|x+e h-z|^{n+1}} \dz + C\int_{B_{\frac{r}{2}}^c (x+h) \triangle B_{\frac{r}{2}}^c(x)} u(z) K(x-z) \dz\Bigg]
\\
&\leq \frac{1}{|h|^{\upkappa}} \Bigg[C |h| + C\int_{B_{\frac{r}{2}+|h|} (x) \setminus B_{\frac{r}{2}-|h|}(x)} \frac{1}{|x-z|^{n}} \dz\Bigg]
\\
&\leq C|h|^{1-\upkappa} +\frac{C}{|h|^{\upkappa}} \log \left| \frac{\frac{r}{2}+|h|}{\frac{r}{2}-|h|} \right|
\\
&\leq C|h|^{1-\upkappa},
 \end{align*}
where we have used $u \in L^{\infty} (\Rn)$, \eqref{kernel}-\eqref{kernel-der} and in the fifth line the fact 
$$B_{\frac{r}{2}}^c (x+eh) \triangle B_{\frac{r}{2}}^c(x) \subset B_{\frac{r}{2}+|h|} (x) \setminus B_{\frac{r}{2}-|h|}(x).$$
Using Lipschitz regularity of $\chi$ and $w \in L^{\infty} (B_{\frac{3r}{4}})$, we have
$$\sup_{x\in B_{\frac{r}{4}}}\int_{B_{\frac{r}{2}}} \frac{w(x+z)[\chi(x) - \chi(x+z)]}{|z|^n} \dz \leq C\int_{B_{\frac{r}{2}}}|z|^{-n+1}\dz < \infty.$$
Gathering these estimates we have the claim \eqref{E3.2C}.

Therefore, by \eqref{E3.2B} and Theorem~\ref{T2.2},  we obtain for all $x,y \in B_{r/16}$ that
$$|w(x) - w(y)| \leq C\left(\norm{w}_{L^{\infty}(B_{\frac{r}{2}})} + \norm{F}_{L^\infty(B_{\frac{r}{4}})}+ \int_{\Rn} \frac{|w(z)|}{(1+|z|)^n}\dz\right) |\log|x-y||^{-\beta}.$$
Since {$ \supp v\Subset B_{\frac{r}{4}}$} and $\norm{w}_{L^{\infty}(B_{\frac{r}{4}})}\leq \norm{u}_{C^{0, \upkappa}(B_r)}$, our desired result follows.
\end{proof}
Combining Lemma~\ref{L3.2} and the approach of \cite[Lemma~5.6]{CC95} we obtain a logarithmic improvement of the H\"older regularity.
\begin{lem}\label{L3.3}
Suppose that $r\in (0, \frac{1}{2})$. Let  $u \in C(\bar{B}_{2r}) \cap L^\infty(\mathbb{R}^n) \cap L^1_0(\mathbb{R}^n)$ be a viscosity solution to \eqref{E3.1} in $B_{2r}$. Suppose that $\rho, f \in C^{0, \upkappa}(\bar{B}_{2r})$ with $\rho \geq 0$ for some 
$\upkappa \in (0, 1)$. Then for some $\beta=\beta(n,\kappa)\in (0, 1)$ and $C=C(n, r, \kappa, \norm{\rho}_{C^{0, \upkappa}})>0$ we have
\begin{equation*}
|u(x) - u(y)| \leq C\left(\norm{u}_\infty +  [f]_{C^{0, \upkappa}(B_r)}\right) \frac{|x-y|^\upkappa}{|\log|x-y||^{\beta}}
\quad \text{for}\; x, y\in B_{\frac{r}{64}}.
\end{equation*}
\end{lem}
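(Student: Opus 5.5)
First, Theorem~\ref{T3.1} (applied with $r_1=r$, $r_2=2r$ after the usual iteration and covering) gives $u\in C^{0,\upkappa}(\bar B_{r})$, with
$$\|u\|_{C^{0,\upkappa}(B_r)}\leq C\bigl(\norm{u}_\infty+[f]_{C^{0,\upkappa}(B_{2r})}\bigr).$$
We then apply Lemma~\ref{L3.2} to $u$ on $B_r$. For every unit vector $e$ and every $0<|h|<\frac{r}{8}$, the difference quotient
$$w_h(x)=\frac{u(x+eh)-u(x)}{|h|^{\upkappa}}$$
satisfies, for $x,y\in B_{\frac{r}{32}}$,
$$|w_h(x)-w_h(y)|\leq M\,|\log|x-y||^{-\beta},$$
with $M=C(\norm{u}_\infty+[f]_{C^{0,\upkappa}})$. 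The point is that $M$ and $\beta$ do not depend on $h$ or $e$.

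Next comes the Caffarelli--Cabr\'e trick \cite[Lemma~5.6]{CC95}: turn this uniform modulus for the quotients into an improved modulus for $u$. Fix $x,y\in B_{\frac{r}{64}}$ and write $y=x+te$ with $t=|x-y|$ small. Let $N$ be a large integer, to be chosen depending on $t$, and put $h=t/N$. Then
$$u(y)-u(x)=\sum_{k=0}^{N-1}\bigl(u(x+(k+1)he)-u(x+khe)\bigr)=h^{\upkappa}\sum_{k=0}^{N-1}w_h(x+khe).$$
Comparing each $w_h(x+khe)$ with $w_h(x)$, whose distance is at most $t$, we get
$$u(y)-u(x)=N h^{\upkappa}w_h(x)+O\bigl(N h^{\upkappa} M|\log t|^{-\beta}\bigr).$$
Note that $Nh^{\upkappa}w_h(x)=N^{1-\upkappa}\bigl(u(x+he)-u(x)\bigr)$, and $|u(x+he)-u(x)|\leq C h^{\upkappa}$. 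This crude version only reproduces $t^{\upkappa}$. The gain must instead come from telescoping dyadically, in the spirit of \cite{CC95}.

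Concretely, set $h_j=2^{-j}t$ and use $w_{2h}=2^{-\upkappa}\bigl(w_h(\cdot)+w_h(\cdot+he)\bigr)$. This gives
$$\bigl|w_{2h}(x)-2^{1-\upkappa}w_h(x)\bigr|\leq 2^{-\upkappa}M|\log h|^{-\beta}.$$
Iterating from $h_J$ up to $h_0=t$ yields
$$|w_t(x)|\leq 2^{(1-\upkappa)J}|w_{h_J}(x)|+C\sum_{j} 2^{(1-\upkappa)j}M|\log h_j|^{-\beta}.$$
The factor $2^{(1-\upkappa)j}$ grows, so the iteration has to run the other way. Rewrite the relation as
$$w_h(x)=2^{\upkappa-1}w_{2h}(x)+O\bigl(M|\log h|^{-\beta}\bigr)$$
and go from the small scale $t$ up to a fixed scale $h_*\sim r$:
$$|w_t(x)|\leq 2^{-(1-\upkappa)J}|w_{h_*}|+C M\sum_{j=0}^{J}2^{-(1-\upkappa)j}|\log(2^{j}t)|^{-\beta}.$$
Here $J\approx\log_2(h_*/t)$. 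The first term is bounded by $C t^{1-\upkappa}\|u\|_\infty$. In the sum, the geometric weights concentrate on $j\lesssim|\log t|/2$, and there $|\log(2^jt)|\geq\frac12|\log t|$. So the sum is at most $CM|\log t|^{-\beta}$, plus a tail of size $2^{-(1-\upkappa)|\log_2 t|/2}$, which is negligible. This gives $|w_t(x)|\leq CM|\log t|^{-\beta}$, which is exactly the claim after multiplying by $t^{\upkappa}$.

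The main obstacle is making sure that every shifted point $x+khe$ and every step size $2^jt$ stays admissible: we need $2^jt<\frac{r}{8}$, and the points must remain in $B_{\frac{r}{32}}$ for Lemma~\ref{L3.2}. We handle this by capping the top scale at $h_*=\frac{r}{64}$, which is why the conclusion holds in $B_{\frac{r}{64}}$. We also need the approximate relation between $w_{2h}$ and $w_h$ at a single point. That relation requires comparing $w_h(x+he)$ with $w_h(x)$ at distance $h$, and so it produces the factor $|\log h|^{-\beta}$, where $h=2^jt$ ranges over the scales. That is precisely what the summation above uses.
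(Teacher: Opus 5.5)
Your argument is correct and is essentially the paper's proof. You obtain $u\in C^{0,\upkappa}(\bar B_r)$ from Theorem~\ref{T3.1}, use Lemma~\ref{L3.2} uniformly in $h$ and $e$ to bound the second differences $u(x+2he)-2u(x+he)+u(x)$ by $C h^{\upkappa}|\log h|^{-\beta}$, and telescope dyadically from the scale $|x-y|$ up to a fixed scale comparable to $r$, which is the paper's adaptation of \cite[Lemma~5.6]{CC95}. The only difference is cosmetic: you bound the weighted sum $\sum_j 2^{-(1-\upkappa)j}|\log(2^jt)|^{-\beta}$ by splitting it at $j\approx \frac12|\log_2 t|$, whereas the paper compares it with $\int t^{\upkappa-2}|\log t|^{-\beta}\dt$ and integrates by parts; the exploratory false starts can simply be dropped from a final write-up.
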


\begin{proof}
Dividing $u$ by $\norm{u}_\infty +  [f]_{C^{0, \upkappa}(B_r)}$, we may assume that $\norm{u}_\infty +  [f]_{C^{0, \upkappa}(B_r)}\leq 1$. Moreover,
by Theorem~\ref{T3.1} we have $u\in C^{0, \upkappa}(\bar{B}_r)$. Choose $\beta$ from Lemma~\ref{L3.2} and define $g(t)=\frac{t^{\upkappa-1}}{|\log|t||^\beta}$. Let $t_\beta\in (0, \frac{3}{4}]$ be such that $g$ is monotonically decreasing in $(0, t_\beta]$. Moreover, setting
$C_\beta=\max\{1,\frac{1}{g(t_\beta)}\max_{[t_\beta, \frac{3}{4}]} g(t)\}$, we  see that
$$g(t)\leq C_\beta g(s)\quad \text{for} \; 0<s\leq t\leq \frac{3}{4}, \; s\in (0, t_\beta].$$
Given a unit vector $e$, it is enough to bound $|u(x+eh)-u(x)|$ for $x\in B_{\frac{r}{64}}$, $|h|\leq \frac{r}{64}\wedge t_\beta$, uniformly in $e$.
Let $m \geq 0$ be the (smallest) integer satisfying 
$|x+e2^m h| \leq \frac{r}{32} < |x+e2^{m+1} h|$ and define $\tau_0 = 2^m h$. Then, $|\tau_0| - |x| \leq \frac{r}{32} \leq 2|\tau_0| +|x|$ and $|x| < \frac{r}{64} $, implying
\begin{equation}\label{T3.3A}
\frac{r}{128}  < |\tau_0| < \frac{r}{16} .\end{equation}
Define $\ell(t) := u(x+et)-u(x)$. Using Lemma \ref{L3.2} we get
$$|\ell(\tau_0) - 2 \ell(\frac{\tau_0}{2})| = |u(x+ e\tau_0) +u(x) - 2u(x+\frac{e\tau_0}{2})| \leq C \left| \frac{\tau_0}{2} \right|^\upkappa \left|\log |\frac{\tau_0}{2}| \right|^{-\beta}$$
Similarly,
$$|2\ell(\frac{\tau_0}{2}) - 2^2 \ell(\frac{\tau_0}{2^2})|  \leq 2C \left| \frac{\tau_0}{2^2} \right|^\upkappa \left|\log |\frac{\tau_0}{2^2}| \right|^{-\beta},$$
and for $j=1,2 \dots , m$, we get
\begin{align*}
|2^{j-1}\ell(\frac{\tau_0}{2^{j-1}}) - 2^j \ell(\frac{\tau_0}{2^j})| &\leq 2^{j-1 }C  |\frac{\tau_0}{2^j}|^{\upkappa}{\left|\log|{\frac{\tau_0}{2^j}|} \right|^{-\beta}} 
\\
&= 2^{j(1-\upkappa) -1} |\tau_0|^\upkappa {\left|\log|{\frac{\tau_0}{2^j}|} \right|^{-\beta}}.
\end{align*}
Applying triangle inequality, we compute
\begin{align*}
|\ell(\tau_0) - 2^m\ell(\frac{\tau_0}{2^m})| &\leq C |\tau_0|^\upkappa \sum_{j=1}^{m} 2^{j(1-\upkappa) -1}  {\left|\log|{\frac{\tau_0}{2^j}|}\right|^{-\beta}}
\\
& \leq C2^{1-\upkappa}  |\tau_0|^\upkappa \sum_{j=1}^{m} \int\limits_{2^{-j}}^{2^{-j+1}}\frac{t^{\upkappa -2}}{\left|\log(\tau_0 t)\right|^{\beta}}\dt
\\
&\leq C  |\tau_0|^\upkappa  \int\limits_{2^{-m}}^{1}\frac{t^{\upkappa -2}}{\left|\log(\tau_0 t)\right|^{\beta}}\dt
=C  |\tau_0|  \int\limits_{|\tau_0|2^{-m} }^{|\tau_0|}\frac{t^{\upkappa -2}}{\left|\log( t)\right|^{\beta}}\dt.
\end{align*}
Therefore, by substituting $2^{-m}\tau_0=h$ we have
\begin{align}\label{AB001}
|\ell(h)| &\leq 2^{-m} |\ell(\tau_0)| + C |\tau_0|2^{-m} \int\limits_{|h|}^{|\tau_0|}\frac{t^{\upkappa -2}}{\left|\log( t)\right|^{\beta}}\dt\nonumber
\\
&\leq C\frac{|h|}{|\tau_0|} + C |h| \int\limits_{|h|}^{|\tau_0|}\frac{t^{\upkappa -2}}{\left|\log( t)\right|^{\beta}}\dt,
\end{align}
where we have used that $u \in L^{\infty}(\Rn)$. Let $\delta< |\tau_0|$ be such that $\frac{\beta}{1-\upkappa}\frac{1}{|\log\delta|}<\frac{1}{2}$. Then, applying integration by parts, we see that
\begin{align*}
\mathbbm{1}_{[h, \infty)}(\delta) \int\limits_{|h|}^{\delta}\frac{t^{\upkappa -2}}{\left|\log( t)\right|^{\beta}}\dt &= \mathbbm{1}_{[h, \infty)}(\delta) \left[\frac{t^{\upkappa-1}}{\upkappa-1}\frac{1}{\left|\log( t)\right|^{\beta}}\right]^{t=\delta}_{t=h}
+ \mathbbm{1}_{[h, \infty)}(\delta) \frac{\beta}{1-\upkappa}\int\limits_{|h|}^{\delta}\frac{t^{\upkappa -2}}{\left|\log( t)\right|^{1+\beta}}\dt
\\
&\leq \frac{h^{\upkappa-1}}{1-\upkappa}\frac{1}{\left|\log(h)\right|^{\beta}}
+ \mathbbm{1}_{[h, \infty)}(\delta) \frac{\beta}{1-\upkappa}\frac{1}{|\log\delta|}\int\limits_{|h|}^{\delta}\frac{t^{\upkappa -2}}{\left|\log( t)\right|^{\beta}}\dt,
\end{align*}
giving us
$$\mathbbm{1}_{[h, \infty)}(\delta) \int\limits_{|h|}^{\delta}\frac{t^{\upkappa -2}}{\left|\log( t)\right|^{\beta}}\dt\leq \frac{2}{1-\upkappa}\frac{h^{\upkappa-1}}{\left|\log(h)\right|^{\beta}}.$$
Inserting it in \eqref{AB001} we conclude
\begin{align*}
|\ell(h)|\leq C\frac{|h|}{|\tau_0|} + \frac{2C}{(1-\upkappa)}\frac{h^{\upkappa}}{\left|\log(h)\right|^{\beta}} + C |h| \int\limits_{\delta}^{|\tau_0|}\frac{t^{\upkappa -2}}{\left|\log( t)\right|^{\beta}}\dt\leq C_1 \frac{h^{\upkappa}}{\left|\log(h)\right|^{\beta}},
\end{align*}
where the constant $C_1$ depends on $|\tau_0|$ and $\delta$. This completes the proof.
\end{proof}

Finally, we remove the hypothesis of global boundedness from the solution to obtain
\begin{thm}\label{T3.4}
Suppose $r\in (0, \frac{1}{2})$. Let  $u \in C(\bar{B}_{2r}) \cap L^\infty(B_{3r}) \cap L^1_0(\mathbb{R}^n)$ be a viscosity solution to \eqref{E3.1} in $B_{2r}$. Suppose that $\rho, f \in C^{0, \upkappa}(\bar{B}_{2r})$ with $\rho \geq 0$ for some 
$\upkappa \in (0, 1)$. Then for some $\beta=\beta(n,\kappa)\in (0, 1)$ and $C=C(n, r, \kappa, \norm{\rho}_{C^{0, \upkappa}})>0$ we have
\begin{equation*}
|u(x) - u(y)| \leq C\left(\norm{u}_{L^\infty(B_{3r})} +  [f]_{C^{0, \upkappa}(B_r)} + \norm{u}_{L^1_0}\right) \frac{|x-y|^\upkappa}{|\log|x-y||^{\beta}}
\quad \text{for}\; x, y\in B_{\frac{r}{64}}.
\end{equation*}
\end{thm}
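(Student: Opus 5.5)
We reduce Theorem~\ref{T3.4} to Lemma~\ref{L3.3} by cutting off $u$ away from $B_{3r}$ and absorbing the far part into the right-hand side. Fix a smooth cut-off $\zeta\in C_c^\infty(B_{3r})$ with $0\leq\zeta\leq 1$ and $\zeta\equiv 1$ in $B_{\frac{5r}{2}}$. Set $\tilde u=\zeta u$ and $g=(1-\zeta)u$, so that $u=\tilde u+g$. Then $\tilde u\in L^\infty(\Rn)\cap L^1_0(\Rn)\cap C(\bar B_{2r})$ with $\norm{\tilde u}_\infty\leq \norm{u}_{L^\infty(B_{3r})}$. Since $g\equiv 0$ in $B_{\frac{5r}{2}}$, for $x\in B_{2r}$ the term $L_K g(x)$ is given classically by
$$L_K g(x)=-\int_{\Rn} g(x+z)K(z)\dz=-\int_{|y|\geq \frac{5r}{2}} g(y)K(y-x)\dy,$$
with no singularity, because $|y-x|\geq \frac{r}{2}$.

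Next we check that $\tilde u$ is a viscosity solution of
$$L_K\tilde u+\rho\tilde u=\tilde f:=f-L_K g \quad\text{in } B_{2r}.$$
This is immediate from Definition~\ref{Defi-vis} by linearity. A test function $\varphi$ touching $\tilde u$ at $x_0\in B_{2r}$ within a small ball $B_\delta(x_0)\subset B_{2r}$ gives $\varphi+g$ touching $u$ there. Moreover $(\varphi+g)_\delta=\varphi_\delta+g$, because $g=0$ on $B_\delta(x_0)$. Finally $\rho\tilde u=\rho u$ in $B_{2r}$.

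It remains to control $[\tilde f]_{C^{0,\upkappa}(B_{r})}$, or on $B_{2r}$ if needed after shrinking radii slightly. For $x,x'\in B_{2r}$ and $|y|\geq\frac{5r}{2}$ we have $|y-x|\geq \frac r2$ and $|y-x'|\geq\frac r2$, and every point on the segment between them satisfies the same bound. By \eqref{kernel-der} and the mean value theorem,
$$|K(y-x)-K(y-x')|\leq C\kappa\,|x-x'|\,|y-x|^{-n-1}\leq C(r)\,|x-x'|\,(1+|y|)^{-n-1}.$$
Hence $|L_Kg(x)-L_Kg(x')|\leq C(r)|x-x'|\int |u(y)|(1+|y|)^{-n}\dy\leq C(r)\norm{u}_{L^1_0}|x-x'|$. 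Since $r<1$, this Lipschitz bound gives
$$[\tilde f]_{C^{0,\upkappa}}\leq [f]_{C^{0,\upkappa}}+C\norm{u}_{L^1_0}.$$

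We now apply Lemma~\ref{L3.3} to $\tilde u$; its hypotheses are met ($\rho\geq 0$, $\rho,\tilde f\in C^{0,\upkappa}$, $\tilde u$ continuous and globally bounded). This gives the log-improved H\"older bound for $\tilde u$ on $B_{r/64}$ with constant $C(\norm{u}_{L^\infty(B_{3r})}+[f]_{C^{0,\upkappa}}+\norm{u}_{L^1_0})$. Since $\tilde u=u$ on $B_{r/64}$, the conclusion follows.

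The main point needing care is the bookkeeping of domains. Lemma~\ref{L3.3} uses the seminorm of $f$ on $B_r$ but requires H\"older continuity on $\bar B_{2r}$. Our estimate for $L_Kg$ is valid on all of $\bar B_{2r}$, so this is harmless. Otherwise the argument is routine.
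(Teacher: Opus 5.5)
Your proof is correct and follows the paper's route: you cut $u$ off to a bounded, compactly supported function that agrees with $u$ near $\bar B_{2r}$, and absorb the tail $-L_K g$ (the paper's $G$) into the right-hand side. You then show this tail is Lipschitz on $B_{2r}$ with constant $C(r)\norm{u}_{L^1_0}$ via \eqref{kernel-der} and apply Lemma~\ref{L3.3}. The differences are minor: writing the tail as an integral over the fixed support $\{|y|\geq \frac{5r}{2}\}$ of $g$ lets you skip the paper's domain-shift term $J_1$, and you explicitly verify that $\tilde u$ is a viscosity solution, which the paper leaves implicit.
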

\begin{proof}
The proof is quite standard which uses a cut-off function to reduce the problem in the set-up of Lemma~\ref{L3.3}. Firstly, dividing both sides of \eqref{E3.1} by $\norm{u}_{L^\infty(B_{3r})} +  [f]_{C^{0, \upkappa}(B_r)} + \norm{u}_{L^1_0}$, we may assume that
$$
\norm{u}_{L^\infty(B_{3r})} +  [f]_{C^{0, \upkappa}(B_r)} + \norm{u}_{L^1_0}\leq 1.
$$
Now, consider a cut-off function $\chi\in C^\infty_c(B_{\frac{5r}{2}})$, $0\leq \chi\leq 1$ and $\chi=1$ in $B_{\frac{9r}{4}}$. Letting $w(x)=u(x)\chi(x)$, we observe from \eqref{E3.1} that
\begin{equation}\label{ET3.4A}
L_K w + \rho(x) w= f + \underbrace{\int_{|z|\geq \frac{r}{4}}(u(x+z)(1-\chi(x+z))) K(z)\dz}_{:=G(x)}\quad \text{in}\; B_{2r}.
\end{equation}
Now the proof will follow from Lemma~\ref{L3.3} and \eqref{ET3.4A}, once we show that
\begin{equation}\label{ET3.4B}
\sup_{B_{2r}}|G(x)| + \sup_{x\neq y, x, y\in B_{2r}} \frac{|G(x)-G(y)|}{|x-y|}\leq C
\end{equation}
for some constant $C$ dependent on $n, \kappa$ and $r$. 

Write
$$G(x)=\int_{|x-z|\geq \frac{r}{4}}(u(z)(1-\chi(z))) K(x-z)\dz.$$
Since $|x-z|\geq C_1(r+|z|)$ for $|x-z|\geq \frac{r}{4}$, $x\in B_{2r}$, and some universal constant $C_1$, from \eqref{kernel} and the fact $\norm{u}_{L^1_0}\leq 1$ it follows that
$$\sup_{x\in B_{2r}}|G(x)|\leq C,$$
where $C$ depends on $n, \kappa$ and $r$. In view of the above estimate, it is enough to show the Lipschitz property when $|x-y|\leq \frac{r}{10}$. Let $\xi=u(1-\chi)$ and write
\begin{align*}
|G(x)-G(y)|&\leq \underbrace{\left|\int_{B^c_{\frac{r}{4}}(x)} \xi(z)K(x-z)\dz-\int_{B^c_{\frac{r}{4}}(y)} \xi(z)K(x-z)\dz\right|}_{:=J_1}
\\
&\quad  + \underbrace{\left|\int_{B^c_{\frac{r}{4}}(y)} \xi(z)K(x-z)\dz-\int_{B^c_{\frac{r}{4}}(y)} \xi(z)K(y-z)\dz\right|}_{:=J_2}.
\end{align*}
Since, for $x, y\in B_{2r}$,
$$B^c_{\frac{r}{4}}(x)\triangle B^c_{\frac{r}{4}}(y)=B_{\frac{r}{4}}(x)\triangle B_{\frac{r}{4}}(y)\subset B^c_{\frac{r}{10}}(x)\cap B_{3r},$$
we have 
$$J_1\leq \kappa \norm{u}_{L^\infty(B_{3r})} r^{-n} {|B_{\frac{r}{4}}(x)\triangle B_{\frac{r}{4}}(y)|}\leq C |x-y|$$
for some constant $C$, dependent on $n, \kappa$ and $r$. On the other hand, using \eqref{kernel-der}, we see that
$$|K(x-z)-K(y-z)|\leq C \frac{|x-y|}{|y-z|^{n+1}}$$
for $|y-z|\geq \frac{r}{4}$. Since $\norm{u}_{L^1_0}\leq 1$, we get $J_2\leq C |x-y|$. Combining these estimates we thus have \eqref{ET3.4B}. Hence the proof.
\end{proof}
\begin{proof}[Proof of Theorem~\ref{T1.2}]
Note that applying the same trick of cut-off function, as in Theorem~\ref{T3.4}, 
we can conclude from Theorem~\ref{T2.2} that $u\in C(\Omega)$.
Since $\rho_n>0$ for $n\geq 2$, proof follows by applying a covering argument together with Proposition~\ref{P2.4} and Theorem~\ref{T3.4}.
\end{proof}

\section{Harnack inequality}\label{Harnack}
In this section, we prove the Harnack inequality for operators of the form $L_K u + \rho u$. Our analysis requires $K$ to satisfy only \eqref{kernel}, without relying on \eqref{kernel-der}. The core strategy of the proof follows \cite{DRV22} (see also \cite[Section~3.3]{FRRO}). 
To account for the operator's lack of scale invariance and the non-integrability of the kernel's tail, we introduce suitable modifications into the proof.

We first prove a weak Harnack inequality for nonnegative supersolutions.
\begin{thm}[Weak Harnack inequality]\label{weak-har} 
Let $u \in L_0^1(\Rn) \cap LSC(B_{r})$, $r\in (0, \frac{1}{4}]$, and $\rho\in L^\infty(B_{r})$ satisfy

\begin{gather*}
L_K u + \rho u \geq -C_0 \quad  \text{in}\; B_{r},
\\[1mm]
u \geq 0 \quad  \text{in}\;\Rn,
\end{gather*}
in the viscosity sense, for some $C_0\geq 0.$ Then there exist $C$, dependent on $n,\kappa, r$ and $\norm{\rho}_{L^\infty(B_{r})}$, such that 
$$\|u\|_{L_0^1(\Rn)}\leq C\left(\inf_{B_{\frac{r}{2}}}u+C_0\right).$$
\end{thm}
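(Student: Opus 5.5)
The plan is a one-barrier argument in the spirit of \cite{DRV22}: slide a fixed bump under $u$ until it touches, and test the viscosity supersolution inequality at the contact point. At that point the nonlocal part of $L_K$ controls a weighted integral of $u$, and this weighted integral is comparable to $\norm{u}_{L^1_0(\Rn)}$.

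\textbf{Step 1: the barrier and the contact point.} Fix $\eta\in C^\infty_c(B_{\frac{3r}{4}})$ with $0\leq\eta\leq 1$ and $\eta\equiv 1$ on $B_{\frac{r}{2}}$. Set
$$t^*:=\sup\{t\geq 0:\ u\geq t\eta \text{ in } \Rn\}.$$
Since $u\geq 0$ in $\Rn$, we have $t^*\geq 0$. Since $\eta=1$ on $B_{\frac r2}$, we have $t^*\leq \inf_{B_{\frac r2}}u$. Consider the LSC function $u-t^*\eta$ on the compact set $\bar B_{\frac{3r}{4}}\subset B_r$. Its minimum is attained at some $x_0$, and this minimum equals $0$. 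Indeed, outside $\supp\eta$ we have $u-t^*\eta=u\geq 0$, while a positive minimum would let us enlarge $t^*$. If $x_0\notin\supp\eta$, then $u(x_0)=0$, and the test function $t^*\eta$ still touches $u$ from below at $x_0$. So in every case $\varphi:=t^*\eta$ is a smooth (hence H\"older) test function with $\varphi(x_0)=u(x_0)$ and $\varphi\leq u$ in all of $\Rn$.

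\textbf{Step 2: the supersolution inequality at $x_0$.} For small $\delta>0$ the supersolution counterpart of Definition~\ref{Defi-vis} gives
$$L_K\varphi_\delta(x_0)+\rho(x_0)u(x_0)\geq -C_0 .$$
Write $\varphi_\delta=t^*\eta+(u-t^*\eta)\mathbbm{1}_{B^c_\delta(x_0)}$. Then
$$L_K\varphi_\delta(x_0)=t^*L_K\eta(x_0)-\int_{B_\delta^c(x_0)}(u-t^*\eta)(y)\,K(x_0-y)\dy .$$
Since $\eta$ is smooth with compact support, $|L_K\eta|\leq C(n,\kappa,r)$ on $B_r$. Also $u(x_0)=t^*\eta(x_0)\leq t^*$, so $|\rho(x_0)u(x_0)|\leq\norm{\rho}_\infty t^*$. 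Combining these, we get
$$\int_{B_\delta^c(x_0)}(u-t^*\eta)(y)K(x_0-y)\dy\leq C t^*+C_0 .$$
The integrand is nonnegative, so letting $\delta\to 0$ by monotone convergence gives the same bound over all of $\Rn$.

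\textbf{Step 3: comparison with the $L^1_0$ norm.} By \eqref{kernel}, $K(x_0-y)\geq \kappa^{-1}|x_0-y|^{-n}$. Since $|x_0|<1$, we have $|x_0-y|\leq 1+|y|$, and therefore $K(x_0-y)\geq\kappa^{-1}(1+|y|)^{-n}$. This yields
$$\int_{\Rn}\frac{u(y)-t^*\eta(y)}{(1+|y|)^n}\dy\leq C(t^*+C_0).$$
Adding $t^*\int\eta(1+|y|)^{-n}\dy\leq Ct^*$ to both sides, we obtain
$$\norm{u}_{L^1_0}\leq C(t^*+C_0)\leq C\Bigl(\inf_{B_{\frac r2}}u+C_0\Bigr).$$

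\textbf{Main obstacle.} The delicate point is Step 1 together with the passage $\delta\to0$. The function $u$ is only LSC and possibly unbounded, so we need that a contact point exists inside $B_r$, including in the degenerate case $t^*=0$. We also need that the excluded ball $B_\delta(x_0)$ costs nothing. The choice of a compact contact set $\bar B_{\frac{3r}{4}}$ and the nonnegativity of $u-t^*\eta$, which makes monotone convergence applicable, are what I expect to handle both issues. Note that no scaling is used, so the constants depend on $r$, consistent with the statement.
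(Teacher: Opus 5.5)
Your proposal is correct and follows essentially the same route as the paper. You slide $t^*\eta$ under $u$, test the supersolution inequality at the contact point, let $\delta\to0$ by monotone convergence, and compare with $\norm{u}_{L^1_0(\Rn)}$ via $|x_0-y|\leq 1+|y|$. The only difference is cosmetic: the paper handles the case where $u$ vanishes somewhere in $B_r$ separately by testing with $\varphi\equiv 0$, whereas you absorb that case by noting that $t^*\eta$ still touches $u$ from below at a zero of $u$ lying outside $\supp\eta$.
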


\begin{proof}
First suppose that there is a point $x_0\in B_r$ satisfying $u(x_0)=0$. In this case, we can use $\varphi\equiv 0$ as a test function which would touch
$u$ from below at the point $x_0$ in a ball $B_\delta(x_0)$ for all sufficiently small $\delta>0$. Thus, from the definition of viscosity solution
we obtain
$$-\int_{|x_0-z|\geq\delta} u(z)K(x_0-z)\dz \geq -C_0,$$
giving us
\begin{align*}
C_0 + \inf_{B_{\frac{r}{2}}} u\geq C_0\geq \int_{|x_0-z|\geq\delta} u(z)K(x_0-z)\, \dz &\geq \kappa^{-1}\int_{|x_0-z|\geq\delta} \frac{u(z)}{1+|x_0-z|^n}\dz
\\
&\geq C \int_{|x_0-z|\geq\delta} \frac{u(z)}{1+|z|^n}\dz.
\end{align*}
Letting $\delta\to 0$, we get the result.

Next we assume $u>0$ in $B_r$.
Let $\eta\in C_c^\infty(B_{\frac{3r}{4}})$ be such that $\eta\in [0,1]$ and $\eta\equiv 1$ in $B_{\frac{r}{2}}$. Define
 $$t=\max\{\tau \geq 0: \tau\eta\leq u\;\text{in}\;B_{r}\}.$$ 
 Since $\eta=1$ in $B_{\frac{r}{2}}$, $t$ has to be finite.
Since $(u-t\eta)$ is lower semicontinuous in $\bar{B}_{\frac{3r}{4}},$
there exists $x_0\in B_{\frac{3r}{4}}$ such that $t\eta(x_0)=u(x_0)$. In other words, $t\eta$ touches $u$ from below at $x_0$ in 
$B_{r}$. For $\delta\ll\frac{r}{4}$, define
   $$
\varphi_\delta(x)=\left\{\begin{array}{ll}
t\eta(x) & \text{if}\; x\in B_\delta(x_0),
\\[2mm]
u(x) & \text{otherwise}.\;
\end{array}
\right.
$$
By the definition of viscosity solution 
$$
L_K\varphi_\delta(x_0)+\rho(x_0)\varphi_\delta(x_0)\geq -C_0,
$$
 which implies
\begin{equation}\label{ET4.1A}
\int_{|z|\geq \delta}(u(x_0)\mathbbm{1}_{B_1}(z) - u(x_0+z) )K(z)\dz + t \int_{|z|< \delta} (\eta(x_0)\mathbbm{1}_{B_1}(z) -\eta(x_0+z))K(z)\dz\geq -C_0 - t\norm{\rho}_{L^\infty(B_{r})}.
\end{equation}
Moreover, since $u-t\eta\geq 0$ in $\Rn$ and $(u-t\eta)(x_0)=0$, we get
\begin{align*}
   & \int_{|z|\geq \delta}(u(x_0)\mathbbm{1}_{B_1}(z) - u(x_0+z) )K(z)\dz 
 \\
 &\quad=
    \int_{|z|\geq \delta}\left((u-t\eta)(x_0)\mathbbm{1}_{B_1}(z) -(u-t\eta)(x_0+z)\right) K(z)\dz
\\
&\qquad +t\int_{|z|\geq \delta}(\eta(x_0)\mathbbm{1}_{B_1} (z)- \eta(x_0+z)) K(z)\dz
\\
&\quad\leq -\frac{1}{\kappa}\int_{|z|\geq \delta}\frac{(u-t\eta)(x_0+z)}{1+|z|^n}\dz+  t \kappa \int_{\Rn}\frac{|\eta(x_0)\mathbbm{1}_{B_1} (z) -\eta(x_0+z) |}{|z|^n}\dz
\\
&\quad \leq - \frac{1}{\kappa}\int_{|z|\geq \delta}\frac{u(x_0+z)}{1+|z|^n}\dz + tC_\eta
\end{align*}
 for some $C_\eta>0$. Substituting this into \eqref{ET4.1A} and letting $\delta\to 0$, we obtain
$$\frac{1}{\kappa}\int_{\Rn}\frac{u(x_0+z)}{1+|z|^n}\dz \leq C_0 + t C_\eta + t\norm{\rho}_{L^\infty(B_{r})}.$$ 
Observe that $t\leq \inf_{B_{\frac{r}{2}}}u$. Hence
\begin{align*}
C_0+\left(C_\eta + \norm{\rho}_{L^\infty(B_{r})}\right)\inf_{B_{\frac{r}{2}}}u &\geq \frac{1}{\kappa} \int_{\Rn}\frac{u(x_0+z)}{1+|z|^n}\dz
\\
&= \frac{1}{\kappa} \int_{\Rn}\frac{u(z)}{1+|z-x_0|^n}\dz
\\
&\geq C \int_{\Rn}\frac{u(z)}{1+|z|^n}\dz.
\end{align*}
This completes the proof.
\end{proof}

We next prove the other half of the Harnack inequality. The following fact will be used at several occasions in the proof below. If 
$w\in C^{0,\gamma}$ for some $\gamma\in (0, 1)$,
and $u$ is a viscosity sub/super solution, then $L_K(u+w)=L_K u + L_K w$ makes sense in the viscosity sense. 
\begin{thm}[Half-Harnack inequality]\label{half-har}
Let $u \in L_0^1(\Rn) \cap USC(B_{r}), r\in (0, \frac{1}{4})$, satisfy
$$
\left\{\begin{array}{ll}
L_K u + \rho u \leq C_0 & \text{in}\; B_{r},
\\[2mm]
u\geq 0 & \text{in}\;\Rn,
\end{array}
\right.
$$
in the viscosity sense, for some $C_0\geq 0$. Also, assume that $\rho\geq 0$.
Then there exists $C=C(n, r, \kappa)>0$ such that $$\sup_{B_{\frac{r}{8}}} u\leq C\left(\|u\|_{L_0^1(\Rn)}+C_0\right).$$
\end{thm}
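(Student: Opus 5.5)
The plan is to adapt the barrier-touching argument of \cite{DRV22} (see also \cite[Section~3.3]{FRRO}), replacing the scaling arguments by direct computations with the kernel bounds \eqref{kernel}. By linearity I normalize so that $\|u\|_{L^1_0(\Rn)}+C_0\leq 1$; the goal is then $\sup_{B_{r/8}}u\leq C$. Set $d(x):=r-|x|$ for $x\in B_r$ and consider the barrier $\phi(x)=d(x)^{-n}$ on $B_r$. Let
$$t=\min\{\tau\ge 0:\ u\le \tau\phi \ \text{in } B_r\}.$$
Since $u$ is USC and $\phi\to\infty$ at $\partial B_r$, this minimum is attained at some $x_0\in B_r$ with $u(x_0)=t\,d_0^{-n}$, where $d_0=d(x_0)$. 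Because $\phi\le (8/7)^n r^{-n}$ on $B_{r/8}$, it suffices to prove $t\le C$. I assume $t$ is large and aim for a contradiction.

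\textbf{Step 1: measure estimate.} Chebyshev and the $L^1_0$ bound (all points lie in $B_1$ here) give
$$|\{u>u(x_0)/2\}\cap B_r|\le C/u(x_0)=C d_0^n/t.$$
For a small $\theta$ to be fixed, consider the annulus $A=B_{\theta d_0}(x_0)\setminus B_{\theta d_0/2}(x_0)$, whose measure is $c_n\theta^nd_0^n$. Once $t\ge t_0(\theta)$, at least half of $A$ lies in $\{u\le u(x_0)/2\}$.

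\textbf{Step 2: evaluating the equation at $x_0$.} I use the test function $t\phi$, which touches $u$ from above at $x_0$, in a ball $B_\delta(x_0)$, and apply Definition~\ref{Defi-vis}. Since $\rho\ge0$ and $u(x_0)\ge0$, this gives $L_K\varphi_\delta(x_0)\le C_0\le 1$. I split $L_K\varphi_\delta(x_0)$ into four regions.
\begin{itemize}
\item[(i)] On $B_{\theta d_0}$, the inner region, I use $u(x_0)-u(x_0+z)\ge t(\phi(x_0)-\phi(x_0+z))$, a consequence of $u\le t\phi$ in $B_r$. I subtract the odd linear term $t\nabla\phi(x_0)\cdot z$, as was done for $I_1$ in Theorem~\ref{T3.1}. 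On $B_{\theta d_0}(x_0)$ one has $|D^2\phi|\le C d_0^{-n-2}$, because $d\ge d_0/2$ there. Hence this region contributes at least $-C t d_0^{-n-2}\int_{B_{\theta d_0}}|z|^{2-n}\dz=-C\theta^2 u(x_0)$. This estimate is uniform in $\delta$.
\item[(ii)] On the good half of $A$ I have the pointwise bound $u(x_0)-u(x_0+z)\ge u(x_0)/2$. Combined with $K\ge\kappa^{-1}|z|^{-n}\ge \kappa^{-1}(\theta d_0)^{-n}$, this gives a contribution of at least $c\,u(x_0)$, with $c$ independent of $\theta$ because $|A|\sim(\theta d_0)^n$. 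Here $\theta d_0<1$, so the indicator $\mathbbm 1_{B_1}$ is harmless.
\item[(iii)] On the remaining points of $B_1\setminus B_{\theta d_0}$, I simply drop the nonnegative term $u(x_0)\mathbbm1_{B_1}(z)K(z)$.
\item[(iv)] What remains is $-\int_{|z|\ge\theta d_0}u(x_0+z)K(z)\dz$. Using $u\ge0$, $|x_0|<r<1/4$ and \eqref{kernel}, this is bounded below by $-C(\theta d_0)^{-n}\|u\|_{L^1_0}$.
\end{itemize}

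\textbf{Conclusion.} Collecting the four regions gives
$$c\,u(x_0)-C\theta^2u(x_0)-C(\theta d_0)^{-n}\le 1.$$
I fix $\theta$ so small that $C\theta^2\le c/2$ and substitute $u(x_0)=td_0^{-n}$. This yields $t\le C(\theta)\,(d_0^n+1)\le C$, so $\sup_{B_{r/8}}u\le C(\|u\|_{L^1_0}+C_0)$.

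The main obstacle is Step 2(ii): making the positive contribution from the log-type kernel strictly dominate. The kernel has no scaling, so the only gain is a constant factor $\log 2$ over the annulus $A$. This forces the Chebyshev set and the second-order error to be controlled at the same scale $\theta d_0$. The choice of the exponent $n$ in $\phi$ is exactly what makes the $L^1$ tail error $(\theta d_0)^{-n}$ comparable to $u(x_0)/t$. I also need to check that the viscosity definition allows the unbounded $\phi$: it does, since the definition only requires local H\"older regularity near $x_0$, and $\phi$ is smooth there.
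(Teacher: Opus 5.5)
Your argument works, but it takes a genuinely different route from the paper.

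\textbf{Comparison with the paper.} The paper first reduces to $\rho=0$, $C_0=0$ by subtracting $C_1C_0\eta$ and passing to $u_+$, and normalizes $\|u\|_{L^1_0}=1$. It then proves a growth lemma: if $u(x_0)\ge M$ at some $x_0\in B_{r/4}$, then $\sup_{B_{r_0}(x_0)}u>(1+\delta)M$, where $r_0=(c_0M)^{-1/n}$. The proof of the lemma has three steps. It rescales to $w(x)=u(x_0+2r_0x)$; the lack of scale invariance only produces a zeroth-order term $\widehat\rho\,w\ge 0$, which is discarded. It applies the weak Harnack inequality (Theorem~\ref{weak-har}) to $\bigl((1+\delta)M\eta-w\bigr)_+$, which forces the average of $u$ over $B_{r_0}(x_0)$ to be at least $M/2$. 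This contradicts the $L^1_0$ normalization once $c_0$ is small. Iterating the lemma gives points $x_j\in B_{r/4}$ with $u(x_j)\to\infty$, contradicting upper semicontinuity.

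You instead use a single barrier $t\,d^{-n}$, in the spirit of Caffarelli--Silvestre. You replace the weak Harnack step by plain Chebyshev on the $L^1$ mass and evaluate the subsolution inequality directly at the touching point. The exponent $n$ makes the Chebyshev set, the annulus and the tail all scale like $d_0^{\pm n}$, so the estimate closes.

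Your route is shorter and self-contained. It does not use Theorem~\ref{weak-har}, needs no rescaling, and handles $\rho\ge0$ and $C_0$ without any reduction. It also yields the quantitative interior bound $u\le C(\|u\|_{L^1_0}+C_0)\,d^{-n}$ on the whole ball. The paper's route instead reuses its weak Harnack machinery and the iteration template it adapts.

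\textbf{Three repairs needed (none fatal).}

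First, upper semicontinuity on the open ball $B_r$ only bounds $u$ above on compact subsets. So there may be no finite $\tau$ with $u\le\tau\phi$ in $B_r$, and even if there is one, $\sup u/\phi$ need not be attained inside. Run the argument with $d(x)=r'-|x|$ on $B_{r'}$ for a fixed $r'<r$, say $r'=r/2$. There $u$ is bounded above and $u\,d^n\to 0$ at $\partial B_{r'}$, so the maximum of $u/\phi$ is attained in the interior.

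Second, $\phi=(r-|x|)^{-n}$ is not differentiable at the origin. Hence the bound $|D^2\phi|\le Cd_0^{-n-2}$ on $B_{\theta d_0}(x_0)$, and the subtraction of $t\nabla\phi(x_0)\cdot z$, both fail when $|x_0|<\theta d_0$. You do not need second order at all. For $|z|<\theta d_0$ you have $|\phi(x_0)-\phi(x_0+z)|\le C d_0^{-n-1}|z|$, and $\int_{B_{\theta d_0}}|z|^{1-n}\dz=C\theta d_0$. So the inner region contributes at least $-C\theta\,u(x_0)$, which is still absorbed by taking $\theta$ small, and no symmetry of the domain is needed. Alternatively, use the smooth barrier $(r^2-|x|^2)^{-n}$.

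Third, as written $A\subset B_{\theta d_0}(x_0)$, so regions (i) and (ii) overlap and you count the good part of the annulus twice. Take instead $A=B_{2\theta d_0}(x_0)\setminus B_{\theta d_0}(x_0)$ and put the bad half of $A$ into (iii). The final inequality then becomes $(c-C\theta)\,u(x_0)-C(\theta d_0)^{-n}\le 1$, which gives $t\le C$ exactly as you conclude.
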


\begin{proof} 
Let us first argue that we may take $C_0=0$ and $\rho=0$. Note that since $\rho, u\geq 0$, $L_Ku +\rho u\leq C_0$ implies
$L_Ku \leq C_0$. Hence we may set $\rho=0$. Again,
if $C_0>0$, define $\tilde{u}:=u - C_1 C_0 \eta,$ for some $C_1>0$ and $\eta\in C_c^\infty(B_{2r})$, $\eta \equiv 1 \text{ in } B_{r}$ and  $0\leq \eta\leq 1$. Note that 
$$L_K\eta = \int_{B_1} (1-\eta(x+z))K(z)\dz \geq \inf_{B_{r}}\int_{B_1} (1-\eta(x+z))K(z)\dz:=c > 0$$
 in $ B_{r}$. Thus
$$L_K\tilde{u} \leq C_0 - C_1 C_0 L_K \eta  \leq 0 \quad\text{in } B_{r},$$
provided we set $C_1=\frac{1}{c}$. Therefore, it suffices to prove the result for $\tilde{u}$ since
$$
\sup_{B_{\frac{r}{2}}} u \leq \sup_{B_{\frac{r}{2}}} \tilde u  + C_1 C_0 \leq C\left(\|u\|_{L_0^1(\Rn)} + C_1 C_0 \|\eta\|_{L_0^1(\Rn)} +C_0 \right),$$
would imply the desired estimate for $u$. From now on,  we denote $\tilde u$ by $u$ and we have 
\begin{equation}\label{ET4.2A}
L_K u\leq 0.
\end{equation}
Note that $0$ is also a solution to the above equation and therefore, $u_+=\max\{u, 0\}$ is a viscosity subsolution to \eqref{ET4.2A}. Hence we may also 
assume that $u\geq 0$.
If $ u \equiv 0$, the conclusion holds. So we consider $u\neq 0$.
We may assume, upon dividing by a constant, that $\norm{u}_{L_0^1(\Rn)} = 1$.
Hence it is sufficient to prove that if $u \in  L_0^1(\Rn) \cap USC(B_{r})$ is a viscosity solution to
\begin{equation}\label{ET4.2B}
\left\{\begin{array}{ll}
L_K u \leq 0 & \text{in}\; B_{r},
\\[2mm]
u\geq 0 & \text{in}\;\Rn,
\end{array}
\right. \quad \text{ and } \quad \norm{u}_{L_0^1(\Rn)} = 1,
\end{equation}
then
\begin{equation}\label{ET4.2C}
\sup_{B_{\frac{r}{8}}} u\leq C
\end{equation}
for some $C$ depending only on $n, \kappa$ and $r$.

Now, we begin with a scaling argument. Fix $x_0\in B_{\frac{r}{4}}$ and let $ 0<\theta<\frac{r}{8}$. Define 
$$w(x)=u(x_0 + \theta x)\quad \text{and}\quad K_\theta(z)=\theta^nK(\theta z).$$
For $x \in B_1$
\begin{align}\label{ET4.2D}
L_Ku(x_0+\theta x) &= \int_{\Rn} \left(u(x_0+\theta x)\mathbbm 1_{B_1}(z) - u(x_0+\theta x+z)\right) K(z) \dz \nonumber
\\
&=\int_{\Rn} \left(u(x_0+\theta x)\mathbbm 1_{B_{1/\theta}}(z) - u(x_0+\theta (x+z)) \right) K_\theta(z) \dz \nonumber
\\
&= \int_{\Rn} \left(w(x)\mathbbm 1_{B_{1/\theta}}(z) - w(x+z) \right) K_\theta(z) \dz \nonumber
\\
&= \int_{\Rn} \left(w(x)\mathbbm 1_{B_{1}}(z) - w(x+z) \right) K_\theta(z) \dz
+ w(x) \int_{\Rn}\left(\mathbbm 1_{B_{1/\theta}}(z) - \mathbbm 1_{B_1}(z)\right)K_\theta(z)\dz \nonumber
\\
&= L_{K_\theta} w(x) + \widehat{\rho} w(x),
\end{align}
where 
\begin{align*}
\widehat{\rho}= \int_{\Rn}\left(\mathbbm{1}_{B_{1/\theta}}(z) - \mathbbm{1}_{B_1}(z)\right)K_\theta(z)\dz 
=\int_{\{1\leq|z|<1/\theta\}} K_\theta(z)\dz >0.
\end{align*}
Using \eqref{ET4.2B} and \eqref{ET4.2D}, we then obtain for $x_0 \in B_{\frac{r}{4}}$ that 
$$
 L_{K_\theta} w \leq 0\quad \text{in}\; B_1,
$$
and the scaled kernel $K_\theta$ satisfies \eqref{kernel} with the same constant $\kappa$.

\textbf{Claim:} There exist constants $ \delta, c_0 \in (0,1)$, depending only on $n$ and $\kappa$, such that if $u(x_0) \geq M$ for some $x_0 \in B_{\frac{r}{4}}$ and $r_0 =(c_0 M)^{-\frac{1}{n}}< \frac{r}{16}$, then
$$ \sup\limits_{B_{r_0}(x_0)}u > (1+\delta) M.$$
\\
To prove the claim we argue by contradiction. Suppose that
$$ \sup\limits_{B_{r_0}(x_0)}u \leq (1+\delta)M.$$
Define
$$ w(x):=u(x_0+2r_0x)\quad \text{for}\; x\in\Rn. $$
For $x\in B_{\frac{1}{2}}$, we have $ |2 r_0 x|<r_0$, implying
$ w(x)\leq(1+\delta)M$.
Since $ 2 r_0 < \frac{r}{8}$, preceding scaling argument applies with $ \theta = 2 r_0$, and we have
\begin{equation}\label{ET4.2E}
 L_{K_\theta} w \leq 0 \quad \text{in } B_{\frac{1}{4}}.
\end{equation}
Choose a smooth cutoff function $ \eta \in C_c^\infty(B_{3/4}),  0\leq \eta\leq 1, $ and $\eta = 1$ in $B_{\frac{1}{2}}$. It is easily seen that
\begin{equation}\label{ET4.2F1}
 L_{K_\theta}\eta(x)\geq 0 \quad\text{for all }x\in B_{\frac{1}{4}}.
\end{equation}
Define
$$ v(x):=(1+\delta)M\eta(x)-w(x).$$
Using \eqref{ET4.2E} and \eqref{ET4.2F1}, we obtain
$$L_{K_\theta} v = (1+\delta)M L_{K_\theta}\eta-L_{K_\theta} w \geq 0 \qquad\text{in }B_{\frac{1}{4}}.$$
Letting
$ v_+:=\max\{v,0\},\, v_-:=\max\{-v,0\}$, we have
$ v=v_+-v_-$, and since $v\geq0$ in $B_{\frac{1}{2}}$, we have
\begin{equation}\label{ET4.2F}
v_- \equiv 0 \qquad \text{in }B_{\frac{1}{2}}.
\end{equation}
Thus, for $x\in B_{\frac{1}{4}}$, the function $v_-$ vanishes in a neighbourhood of $x$. Consequently,
$$L_{K_\theta} v_-(x) = -\int_{\Rn} v_-(x+z) K_\theta(z)\dz.$$
Moreover, since $v=(1+\delta)M\eta-w$ and $\eta\geq0$, we have $v_-\leq w$. From \eqref{ET4.2F} and \eqref{kernel}, we then find
\begin{align*}
L_{K_\theta} v_-(x) &\geq -\kappa \int_{|x+z|\geq\frac{1}{2}} \frac{u(x_0+2r_0(x+z))}{|z|^{n}}\,\dz
\\
&= -\kappa \int_{|z-x_0|\geq r_0} \frac{u(z)} {|z-(x_0+2r_0x)|^{n}} \dz.
\end{align*}
Since for $|z-x_0|\geq r_0$, we have 
$$|z-(x_0+2r_0x)| \geq |z-x_0|-2r_0|x| \geq |z-x_0|-\frac{r_0}{2} \geq \frac{1}{2}|z-x_0|,$$
leading to
$$ \frac{\kappa}{|z-(x_0+2r_0x)|^{n}} \leq \frac{C_1}{|z-x_0|^{n}}.$$
Next observe that
\begin{equation}\label{ET4.2G}
\frac{1}{|z-x_0|^{n}} \leq \frac{C_2 r_0^{-n}}{(1+|z|)^{n}}, \quad \text{whenever }|z-x_0|\geq r_0.
\end{equation}
Indeed, if $|z|\leq1$, then
$$ |z-x_0|^{-n} \leq r_0^{-n} \leq \frac{2^nr_0^{-n}}{(1+|z|)^{n}},$$
whereas if $|z|>1$, then, since $x_0\in B_{\frac{r}{4}}$, we have $ |z-x_0| \geq |z|-|x_0| \geq |z|-\frac{1}{16} \geq \frac{1}{2}|z|\geq r_0(1+|z|)$,
giving
$$ |z-x_0|^{-n} \leq  \frac{r_0^{-n}}{(1+|z|)^{n}}.
$$
This gives us \eqref{ET4.2G}.
Using \eqref{ET4.2B}, we get
$$ L_{K_\theta} v_-(x) \geq -C_3 r_0^{-n} \int_{\Rn} \frac{u(z)}{(1+|z|)^{n}}\dz = -C_3 r_0^{-n} =-C_3 c_0 M \quad \text{ in }B_{\frac{1}{4}}, $$
where the constant $C_3$ depends only on $n$ and $\kappa$. Hence
$$
L_{K_\theta} v_+ = L_{K_\theta} v + L_{K_\theta} v_- \geq -C_3 c_0 M \quad \text{in }B_{\frac{1}{4}}
$$
 in the viscosity sense. Since $v_+ (0) = (1+\delta)M-u(x_0)\leq \delta M$, applying  Theorem ~\ref{weak-har} to $v_+$, we obtain
\begin{equation}\label{ET4.2H}
\int_{B_{\frac{1}{2}}}v_+(x)\dx \leq C\|v_+\|_{L_0^1(\Rn)} \leq C\left(\inf_{B_{\frac{1}{8}}}v_+ + c_0 M \right)  \leq C\left(v_+ (0) + c_0 M \right) \leq C(c_0+\delta)M.
\end{equation}
Choosing $c_0>0$ and $\delta>0$ sufficiently small in \eqref{ET4.2H} we then get
$$ \int_{B_{\frac{1}{2}}}v_+(x) \dx \leq \frac{M}{2} |B_{\frac{1}{2}}|.$$
Therefore,
\begin{align*}
\int_{B_{\frac{1}{2}}}w(x)\,dx &= (1+\delta)M|B_{\frac{1}{2}}| - \int_{B_{\frac{1}{2}}}v_+(x)\dx
\geq \left( 1+\delta-\frac{1}{2} \right) M |B_{\frac{1}{2}}|
\geq \frac M2|B_{\frac{1}{2}}|,
\end{align*}
implying
$$
\fint_{B_{\frac{1}{2}}}w(x) \dx \geq \frac{M}{2}.
$$
Recalling  $ w(x)=u(x_0+2 r_0 x)$ and applying the change of variable $ z=x_0+2 r_0 x$, we arrive at
$$ \fint_{B_{r_0}(x_0)}u(z)\dz = \fint_{B_{\frac{1}{2}}}w(x)\dx \geq \frac{M}{2}.$$
Therefore,
\begin{equation}\label{ET4.2I}
\int_{B_{r_0}(x_0)}u(z)\dz \geq \frac{M}{2} |B_{r_0}|.
\end{equation}
Since $ x_0\in B_{\frac{r}{4}} \text{ and } r_0<\frac{r}{16}$, we see that
$$ |z| \leq |z-x_0|+|x_0| < r_0+\frac{r}{4} < \frac{r}{16}+\frac{r}{4} = \frac{5r}{16}<\frac{1}{4}$$
for every $z\in B_{r_0}(x_0)$. In particular, $B_{r_0}(x_0)\subset B_{\frac{1}{4}}$. Hence
$ (1+|z|)^{n}\leq 2^n$ for $z\in B_{r_0}(x_0)$.
Using \eqref{ET4.2B} and \eqref{ET4.2I}, we find
$$1 = \int_{\Rn} \frac{u(z)}{(1+|z|)^{n}}\dz \geq \frac{1}{2^n} \int_{B_{r_0}(x_0)}u(z)\dz \geq \frac{M}{2^{n+1}}|B_{r_0}| = \frac{M}{2^{n+1}} |B_1|r_0^{n}.$$
Since $ r_0^{n} = \frac{1} {c_0M}$, we obtain $ 1 \geq \frac{|B_1|}{2^{n+1}c_0}$. Choosing $c_0>0$ further smaller, if necessary, so that $ c_0<\frac{|B_1|}{2^{n+1}}$, we obtain a contradiction. This proves the claim.

We now proceed to prove \eqref{ET4.2C} using our claim established above.
Suppose, on the contrary, that for some $M_0>0$ sufficiently large, there exists $x_0\in B_{\frac{r}{8}}$ such that $u(x_0)>M_0$. Define
$$ M_j=(1+\delta)^jM_0,\quad r_j=(c_0M_j)^{-1/n},\quad \text{for}\; \; j\geq0.$$
By the claim, there exists $ x_1\in B_{r_0}(x_0)$ such that
$$ u(x_1)>M_1=(1+\delta)M_0.$$ 
Applying the claim again at $x_1$, there exists $ x_2\in B_{r_1}(x_1)$ such that 
$u(x_2)>M_2=(1+\delta)^2M_0$ and so on.
This gives us a sequence $\{x_j\}$ satisfying
$ |x_{j+1}-x_j|<r_j$ and 
\begin{equation}\label{eqitr}
u(x_j)>M_j=(1+\delta)^j M_0.
\end{equation}
Since $r_j =c_0^{-1/n}M_0^{-1/n}(1+\delta)^{-j/n}$, we have
\begin{align*}
\sum_{j=0}^{\infty}r_j &= c_0^{-1/n}M_0^{-1/n} \sum_{j=0}^{\infty}(1+\delta)^{-j/n}
= \frac{c_0^{-1/n}M_0^{-1/n}}{1-(1+\delta)^{-1/n}}.
\end{align*}
Choose $M_0$ sufficiently large so that
\begin{equation}\label{eqsumr}
\sum_{j=0}^{\infty}r_j<\frac{r}{16}.
\end{equation}
Then, using $x_0\in B_{\frac{r}{8}}$ and \eqref{eqsumr}, we obtain
\begin{align*}
|x_j|&\leq |x_0|+\sum_{k=0}^{j-1}|x_{k+1}-x_k|
\\
&<\frac{r}{8}+\sum_{k=0}^{\infty}r_k
\\
&<\frac{r}{8}+\frac{r}{16}<\frac{r}{4}.
\end{align*}
This  shows that $x_j \in B_{\frac{r}{4}}$ for each $j$, and the claim can be applied at every step. However, by \eqref{eqitr},
$$u(x_j) \rightarrow+\infty \qquad \text{as }\;j\to\infty.$$
This contradicts the assumption $ u\in USC(B_r)$. Therefore, there exists a constant $C>0$, depending only on $n$ and $\kappa$, such that
$$ \sup_{B_{\frac{r}{8}}}u\leq C.$$
Hence the proof.
\end{proof}

Combining Theorems~\ref{weak-har} and ~\ref{half-har} we get the Harnack inequality.
\begin{thm}\label{T4.3}
Suppose that the kernel $K$ satisfies \eqref{kernel}.
Let $u \in L_0^1(\Rn) \cap C(B_{r}), r\in (0, \frac{1}{4})$, satisfy
\begin{align*}
-C_0\leq L_K u + \rho u &\leq C_0 \quad  \text{in}\; B_{r},
\\[2mm]
u&\geq 0\quad  \text{in}\; B_{3r},
\end{align*}
in the viscosity sense, for some $C_0\geq 0$. Also, assume that $\rho\in L^\infty(B_{r})$ and $\rho\geq 0$.
Then there exists $C=C(n, r, \kappa, \norm{\rho}_\infty)>0$ such that 
$$\sup_{B_{\frac{r}{8}}} u\leq C\left(\inf_{B_{\frac{r}{8}}} u + C_0 + \int_{B^{c}_{3r}} \frac{u_{-}(z)}{|z|^n}\dz\right).$$
\end{thm}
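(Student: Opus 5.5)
The plan is to reduce to the globally nonnegative setting of Theorems~\ref{weak-har} and~\ref{half-har}. Write $u = u_+ - u_-$, with $u_- \equiv 0$ in $B_{3r}$, and set $v := u_+ = u + u_-$. Then $v\geq 0$ in $\Rn$ and $v=u$ in $B_{3r}$. For $x\in B_r$, the function $u_-$ vanishes on $B_{2r}(x)\supset B_{2r}$-neighbourhood of $x$. Hence $L_K u_-(x)$ is classically defined and equals
$$L_K u_-(x) = -\int_{\Rn} u_-(x+z)K(z)\dz = -\int_{B_{3r}^c} u_-(y)K(x-y)\dy .$$
For $y\in B_{3r}^c$ and $x\in B_r$ we have $|x-y|\geq \frac{2}{3}|y|$, so by \eqref{kernel}
$$0\leq -L_K u_-(x) \leq \kappa (3/2)^n \int_{B^c_{3r}} \frac{u_-(y)}{|y|^n}\dy =: C_1 T .$$
Also $\rho u_- = 0$ in $B_r$. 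By linearity in the viscosity sense, we test $v$ with $\varphi$ and test $u$ with $\varphi - u_-$, which is legitimate since $u_-$ is smooth-enough (zero) near the touching point. This gives $L_K v + \rho v = L_K u + \rho u + L_K u_-$ in $B_r$. Hence, in the viscosity sense in $B_r$,
$$-C_0 - C_1 T \leq L_K v + \rho v \leq C_0 .$$
If $T=\infty$ there is nothing to prove, so assume $T<\infty$. We also need $v\in L^1_0$; this holds because $u\in L^1_0$ and $u_-\leq |u|$.

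Next apply Theorem~\ref{half-har} to $v$ with constant $C_0$, using $\rho\geq 0$:
$$\sup_{B_{r/8}} u = \sup_{B_{r/8}} v \leq C\bigl(\|v\|_{L^1_0} + C_0\bigr).$$
Then apply Theorem~\ref{weak-har} to $v$ with constant $C_0 + C_1T$:
$$\|v\|_{L^1_0} \leq C\bigl(\inf_{B_{r/2}} v + C_0 + C_1 T\bigr) \leq C\bigl(\inf_{B_{r/8}} u + C_0 + T\bigr).$$
The last step uses $\inf_{B_{r/2}} v \leq \inf_{B_{r/8}} v = \inf_{B_{r/8}} u$. Chaining the two estimates gives the claim, with $C$ depending on $n,r,\kappa,\norm{\rho}_\infty$; the dependence on $\norm{\rho}_\infty$ enters through Theorem~\ref{weak-har}.

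The main technical point is justifying that $v$ is a viscosity sub/supersolution with the shifted right-hand side. Let $\varphi$ touch $v$ from above (or below) at $x_0\in B_r$ in $B_\delta(x_0)$, with $\delta<r$. Then $\varphi$ also touches $u$ at $x_0$, since $u=v$ on $B_\delta(x_0)$. Moreover, the modified functions satisfy $\varphi_\delta^{(v)} = \varphi_\delta^{(u)} + u_-$, because $u_-$ vanishes in $B_\delta(x_0)$. Therefore $L_K\varphi^{(v)}_\delta(x_0) = L_K\varphi_\delta^{(u)}(x_0) + L_K u_-(x_0)$, and the inequalities transfer directly. Continuity of $u$ supplies the USC/LSC hypotheses of both theorems.
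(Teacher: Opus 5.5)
Your proposal is correct and follows essentially the same route as the paper: write $u_+=u+u_-$, use that $u_-$ vanishes near $B_r$ so that $L_K u_-(x)=-\int u_-(x+z)K(z)\dz\in[-C_1T,0]$ there, and then apply Theorem~\ref{half-har} (upper bound $C_0$) and Theorem~\ref{weak-har} (lower bound $-C_0-C_1T$) to $u_+$. Your explicit check that $\varphi_\delta^{(v)}=\varphi_\delta^{(u)}+u_-$ spells out the viscosity-sense additivity that the paper only asserts; also, the case $T=\infty$ cannot actually occur, since $u\in L^1_0(\Rn)$ and $u_-=0$ in $B_{3r}$ force $T<\infty$.
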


\begin{proof}
Since $u_{-}\equiv 0$ in $B_{3r}$ we get 
$$
- \kappa\int_{|z|\geq 3r}\frac{u_{-}(x+z)}{|z|^n}\dz-C_0\leq L_K u_+ + \rho u_+ \leq C_0 - \kappa^{-1}\int_{|z|\geq 3r}\frac{u_{-}(x+z)}{|z|^n}\dz
$$
in $B_r$, in the viscosity sense. Since, for $x\in B_r$,
\begin{align*}
\int_{|z|\geq 3r}\frac{u_{-}(x+z)}{|z|^n}\dz=\int_{|x-z|\geq 3r}\frac{u_{-}(z)}{|x-z|^n}\dz\leq \int_{|z|\geq 2r}\frac{u_{-}(z)}{|x-z|^n} \dz
&\leq 2^n\int_{|z|\geq 2r}\frac{u_{-}(z)}{|z|^n} \dz\\
&=
2^n\int_{|z|\geq 3r}\frac{u_{-}(z)}{|z|^n} \dz,
\end{align*}
the proof follows from Theorems~\ref{weak-har} and ~\ref{half-har}.
\end{proof}

\begin{proof}[Proof of Theorem~\ref{T1.3}]
Note that applying the same trick as in Theorem~\ref{T3.4}, 
we can conclude from Theorem~\ref{T2.2} that $u\in C(\Omega)$.
Now the proof follows from Proposition~\ref{P2.4} and Theorem~\ref{T4.3}.
\end{proof}

\bigskip
\subsection*{Acknowledgement}
A. Biswas was partially supported by 
an ANRF-ARG grant ANRF/ARG/ 2025/000019/MS .

\subsection*{Data Availability}  All data generated or analyzed during this study are included in this published article.
\medskip

\noindent{\bf Declarations}
\subsection*{Conflicts of Interest}  The authors declare that they have no conflict of interest to this work.

\end{document}